\newtheorem{definition}{Definition}
\newtheorem{thm}{Theorem}
\newtheorem{lem}{Lemma}
\newtheorem{pro}{Proposition}
\newtheorem{cor}{Corollary}
\newtheorem{remark}{Remark}
\newtheorem*{assumption*}{Assumption}
\newmdtheoremenv{assumptionbox}{Assumption}
\newenvironment{rem}{\begin{remark}}{\end{remark}}
\newcommand{\bd}{\begin{definition}} 
\newcommand{\ed}{\end{definition}} 
\newcommand{\bp}{\begin{pro}} 
\newcommand{\ep}{\end{pro}} 
\newcommand{\bt}{\begin{thm}} 
\newcommand{\et}{\end{thm}}
\newcommand{\blm}{\begin{lem}} 
\newcommand{\elm}{\end{lem}}
\newcommand{\R}{\mathbb{R}}
\newcommand{\mcX}{\mathcal{X}}
\newcommand{\mcC}{\mathcal{C}}
\newcommand{\mcS}{\mathcal{S}}
\newcommand{\realextended}{\overline{\R}}
\newcommand{\bi}{\begin{itemize}} 
\newcommand{\ei}{\end{itemize}} 
\newcommand{\bds}{\begin{description}} 
\newcommand{\eds}{\end{description}} 
\newcommand{\beq}{\begin{equation}} 
\newcommand{\eeq}{\end{equation}} 
\newcommand{\vecto}{\operatorname{vec}}
\newcommand{\abs}[1]{\left|#1\right|}
\newcommand{\prox}[1]{\mathrm{prox}_{#1}}
\newcommand{\diag}[1]{[#1]}
\newcommand{\subscr}[2]{#1_{\textup{#2}}}
\newcommand{\setdef}[2]{\{#1 \; | \; #2\}}
\newcommand{\map}[3]{#1\colon #2 \rightarrow #3}
\newcommand{\Hess}{\nabla^2}
\DeclareMathOperator*{\argmin}{arg\,min}
\newcommand{\ds}{\displaystyle}
\newcommand{\xstar}{x^{\star}}
\newcommand{\zstar}{z^{\star}}
\newcommand{\e}{\mathrm{e}}
\newcommand{\norm}[1]{\|#1\|}
\newcommand{\softt}[2]{\operatorname{soft}_{#1}\bigl({#2}\bigr)}
\newcommand{\soft}[1]{\operatorname{soft}_{#1}}
\newcommand{\sign}[1]{\operatorname{sign}\bigl({#1}\bigr)}
\newcommand{\relu}{\operatorname{ReLU}}
\newcommand{\sat}[2]{\operatorname{sat}_{#1}\bigl({#2}\bigr)}
\newcommand{\1}{\mbox{\fontencoding{U}\fontfamily{bbold}\selectfont1}}
\newcommand{\0}{\mbox{\fontencoding{U}\fontfamily{bbold}\selectfont0}}
\newcommand{\amin}{\subscr{a}{min}}
\newcommand{\amax}{\subscr{a}{max}}
\newcommand{\xmin}{\subscr{x}{min}}
\newcommand{\xmax}{\subscr{x}{max}}
\newcommand{\mf}{\rho}
\newcommand{\linexp}[1]{\operatorname{lin-exp}({#1})}
\newcommand{\odeflowtx}[2]{\phi_{#1}\bigl({#2}\bigr)}
\newcommand{\radius}{r}
\newcommand{\tld}{\subscr{t}{cross}}
\newcommand{\ce}{\subscr{c}{exp}}
\newcommand{\cl}{\subscr{c}{lin}}
\newcommand{\loc}{\textup{L}}
\newcommand{\glo}{\textup{G}}
\newcommand{\ball}[2]{B_{#1}{\bigl(#2\bigr)}}
\newcommand{\PI}{\textup{PI}~}
\definecolor{gnred}{RGB}{255,91,89}
\definecolor{gnred1}{RGB}{71,0,0} 
\definecolor{gnred2}{RGB}{117,0,0} 
\definecolor{gnred3}{RGB}{164,0,0} 
\definecolor{gnred4}{RGB}{211,0,0} 
\definecolor{gnred5}{RGB}{255,0,0} 
\definecolor{gnred6}{RGB}{255,42,34} 
\definecolor{gnred7}{RGB}{255,91,89} 
\definecolor{gnblue1}{RGB}{0,36,71}   
\definecolor{gnblue2}{RGB}{0,60,118}  
\definecolor{gnblue3}{RGB}{0,85,164}
\definecolor{gnblue4}{RGB}{0,108,212}
\definecolor{gnblue4}{RGB}{0,108,212}
\definecolor{gnblue5}{RGB}{0,133,255}  
\definecolor{gnblue6}{RGB}{35,156,255} 
\definecolor{gnblue7}{RGB}{88,177,255} 
\definecolor{gnbrown1}{RGB}{71,27,0}  
\definecolor{gnbrown2}{RGB}{117,45,0} 
\definecolor{gnbrown3}{RGB}{164,62,0} 
\definecolor{gnbrown4}{RGB}{211,80,0} 
\definecolor{gnbrown5}{RGB}{255,97,0} 
\definecolor{gnbrown6}{RGB}{255,127,26} 
\definecolor{gnbrown7}{RGB}{255,155,86} 
\newcommand\Item[1][]{%
  \ifx\relax#1\relax  \item \else \item[#1] \fi
  \abovedisplayskip=0pt\abovedisplayshortskip=0pt~\vspace*{-\baselineskip}}
\title{\bf Proximal Gradient Dynamics and Feedback Control for Equality-Constrained Composite Optimization}
\author{
  Veronica Centorrino\thanks{ETH, Zürich {\tt\small vcentorrino@control.ee.ethz.ch}}
\and Francesca Rossi \thanks{Scuola Superiore Meridionale, Italy. {\tt\small f.rossi@ssmeridionale.it}.}
\and Francesco Bullo\thanks{Center for Control, Dynamical Systems, and Computation, UC Santa Barbara, CA, USA. {\tt\small bullo@ucsb.edu}. FB is supported in part by AFOSR grant FA9550-22-1-0059.} 
\and Giovanni Russo\thanks{DIEM, University of Salerno, Italy {\tt\small giovarusso@unisa.it}. GR is supported by the European Union-Next Generation EU Mission 4 Component 1 CUP E53D23014640001.}
}
\begin{document}
\pagestyle{plain}

\maketitle

\begin{abstract}
This paper studies equality-constrained composite minimization problems. This class of problems, capturing regularization terms and inequality constraints, naturally arises in a wide range of engineering and machine learning applications. To tackle these optimization problems, inspired by recent results, we introduce the \emph{proportional--integral proximal gradient dynamics} (PI--PGD): a closed-loop system where the Lagrange multipliers are control inputs and states are the problem decision variables. First, we establish the equivalence between the stationary points of the minimization problem and the equilibria of the PI--PGD. Then for the case of affine constraints, by leveraging tools from contraction theory we give a comprehensive convergence analysis for the dynamics, showing convergence to a stationary point. Moreover, under suitable assumptions, we show linear--exponential convergence towards the equilibrium. That is, the distance between each solution and the equilibrium is upper bounded by a function that first decreases linearly and then exponentially. Our findings are illustrated numerically on a set of representative examples, which include an exploratory application to nonlinear equality constraints.
\end{abstract}

\section{Introduction}
We study equality-constrained non-smooth composite optimization problems (OPs), i.e., problems of the form
\beq
\label{eq:eq_constrained_non_smooth}
\begin{aligned}
\min_{x \in \R^n} \ & f(x) + g(x)\\
\text{ s.t. } & h(x) = \0_m,
\end{aligned}
\eeq
where $\map{f}{\R^n}{\R}$ and $\map{h}{\R^n}{\R^m}$ are differentiable, while $\map{g}{\R^n}{\R}$ is convex, closed, and proper, possibly non-smooth.
These problems arise in many engineering, scientific, and machine learning applications, as they capture regularization terms and convex inequality constraints.
A possible approach to solving~\eqref{eq:eq_constrained_non_smooth} is to use projected dynamical systems or to incorporate the equality constraints into the objective via penalty or augmented Lagrangian methods, and then solve the resulting proximal-gradient dynamics~\cite{NKD-SZK-MRJ:19}.
However, such an approach can become challenging when the proximal operator lacks a closed-form expression or the projection is difficult to compute.

Constrained optimization algorithms can also be interpreted as closed-loop systems, whose goal is to ensure convergence to the optimizer while enforcing feasibility. In this context, in~\cite{VC-SMF-SP-DR:25} a continuous-time control-theoretic framework for equality-constrained smooth optimization has been proposed.
The core idea (see Figure~\ref{fig:control_goal}) is to consider as {\em plant} a dynamical system inspired by the gradient flow with respect to the primal variables of the Lagrangian. The output of this system is $y(t) = h(x(t))$ and the control variables are Lagrange multipliers. The control objective is then to design a feedback controller that drives the output to zero.

\begin{figure}[!h]
    \centering \includegraphics[width=0.48\linewidth]{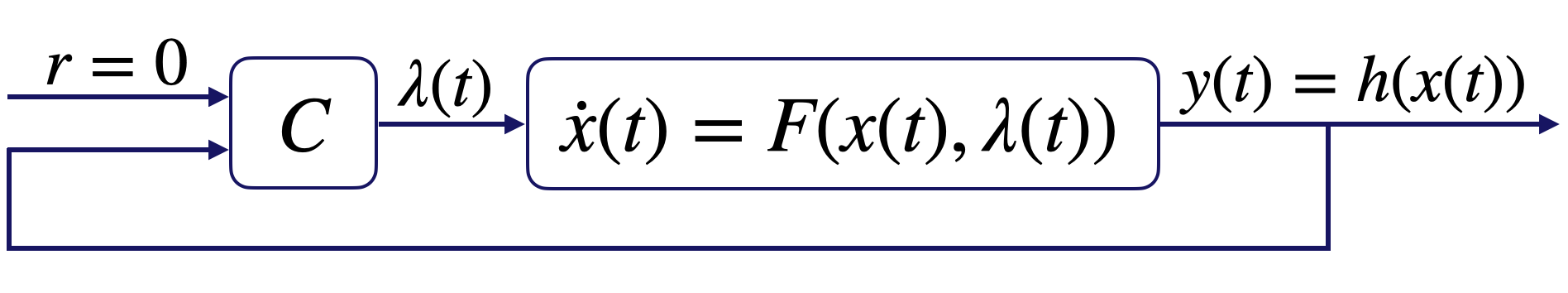}
    \caption{Closed-loop system for equality-constrained OPs: $\dot{x} = F(x, \lambda)$ has the stationary points of the Lagrangian as equilibria. The feedback controller, $C$, is designed to drive $y$ toward the reference value $r = 0$, so that $h(x) = \0_m$.}
    \label{fig:control_goal}
\end{figure}

Inspired by the approach in~\cite{VC-SMF-SP-DR:25}, we propose the \emph{proportional--integral controlled proximal gradient dynamics} (PI--PGD) to solve equality-constrained (non-smooth) composite OPs. For such dynamics, we provide a comprehensive analysis characterizing convergence.

\textit{Literature review:}
Studying OPs as continuous-time dynamics is a classical problem dating back to~\cite{KJA-LH-HU:58}, which has gained renewed interest thanks to developments from, e.g., online  feedback optimization~\cite{GB-JC-JIP-EDA:22} and the analysis of algorithms from a feedback control perspective~\cite{AH-ZH-SB-GH-FD:24}.
The standard approach for constrained OPs is via standard and augmented primal-dual dynamics for smooth cost~\cite{GQ-NL:19} and non-smooth composite OPs~\cite{NKD-SZK-MRJ:19}, respectively. The resulting dynamics are globally exponentially convergent for $f$ strongly convex and $L$-smooth, and $g$ convex, proper and closed.
The use of Lagrange multipliers as feedback controllers has been proposed for smooth OPs with equality~\cite{VC-SMF-SP-DR:25, RZ-AR-JS-NL:25} and inequality~\cite{VC-SMF-SP-DR:24b, RZ-AR-JS-NL:25} constraints, and for smooth constrained OPs via control barrier functions~\cite{AA-JC:24}. Global exponential convergence with PI controller is proved in~\cite{VC-SMF-SP-DR:25, VC-SMF-SP-DR:24b} for the case of full-rank linear constraints and strongly convex and $L$-smooth costs.
More broadly, there has been a growing interest in using strongly contracting dynamics to tackle OPs~\cite{HDN-TLV-KT-JJES:18,VC-AG-AD-GR-FB:23c, AD-VC-AG-GR-FB:23f_corrected} thanks to the highly ordered transient and asymptotic behavior properties enjoyed by such dynamics~\cite{FB:24-CTDS}.
The asymptotic behavior of weakly contracting dynamics has been characterized in, e.g.,~\cite{SC:19} for monotone systems, and~\cite{VC-AD-AG-GR-FB:24a} for convex OPs with a unique minimizer.

\textit{Contributions:}
We propose the PI--PGD for solving equality-constrained composite OPs~\eqref{eq:eq_constrained_non_smooth}. The PI--PGD is a closed-loop system, where: (i) the dynamics for the primal variables (i.e., the plant) has the stationary points of the Lagrangian of~\eqref{eq:eq_constrained_non_smooth} as equilibria; (ii) the dual variables are control inputs and a PI controller is designed so that the closed-loop system converges to an equilibrium, which is a (feasible) minimum for~\eqref{eq:eq_constrained_non_smooth}. We prove the equivalence between the {stationary points} of the OP and the equilibria of the proposed PI--PGD.
Then, we conduct a comprehensive convergence analysis for the widely considered case of linear-equality constraints and strongly convex and $L$-smooth cost term. Our main convergence result shows convergence to a stationary. Moreover, we show that if the equilibrium is locally exponentially stable, convergence towards the equilibrium is \emph{linear-exponential}. That is, the distance between each solution of the dynamics and the equilibrium is upper bounded by a linear--exponential function~\cite{VC-AD-AG-GR-FB:24a}. To establish this convergence property, we leverage contraction theory and characterize global weak infinitesimal contractivity/non-expansiveness and local strong infinitesimal contractivity of the dynamics.
We remark that while we extend the setup in~\cite{VC-SMF-SP-DR:25} by including a non-smooth term, our main contribution lies in the convergence analysis tailored to handle the non-smooth term, which requires a distinct mathematical approach, interesting \emph{per se}.
%
Finally, we validate our results via numerical examples, including an exploratory application to nonlinear equality constraints. Notably, the proposed PI--PGD still converges to the optimal solution beyond the theoretical assumptions.
The code to replicate our numerical results is available at~\url{https://shorturl.at/mPpEZ}.

\section{Mathematical Preliminaries}
We denote by $\0_n$ and $\1_n \in \R^n$ the all-zeros and all-ones vector of size $n$, respectively. Vector inequalities of the form $x \leq (\geq) y$ are entrywise. We let $I_n$ be the $n \times n$ identity matrix. 
The symbol $\otimes$ denotes the Kronecker product.
Given $A, B \in \R^{n\times n}$ symmetric, we write $A \preceq B$ (resp. $A \prec B$) if $B-A$ is positive semidefinite (resp. definite). When $A$ has only real eigenvalues, we let $\subscr{\lambda}{min}(A)$ and $\subscr{\lambda}{max}(A)$ be its minimun and maximum eigenvalue, respectively.
We say that $A$ is \emph{Hurwitz} if $\alpha(A) := \max \setdef{\operatorname{Re}(\lambda)}{\lambda \text{ eigenvalue of } A}<0$, where $\operatorname{Re}(\lambda)$ denotes the real part of $\lambda$.  {Given $x \in \R^n$, we let $\diag{x} \in \R^{n \times n}$ be the diagonal matrix with diagonal entries equal to $x$}.

\paragraph{Norms, Logarithmic Norms and Weak Pairings.}
We let $\| \cdot \|$ denote both a norm on $\R^n$ and its corresponding induced matrix norm on $\R^{n \times n}$. Given $x \in \R^n$ and $\radius >0$, we let $\ball{p}{x,\radius}:= \setdef{z\in\R^n}{\norm{z-x}_p\leq\radius}$ be the \emph{ball of radius $\radius$ centered at $x$} computed with respect to the norm $p$.

Given $A \in \R^{n \times n}$ the \emph{logarithmic norm} (lognorm) induced by $\| \cdot \|$ is 
$$\mu(A) := \lim_{h\to 0^+} \frac{\norm{I_n{+}h A} -1}{h}.$$
Given a symmetric positive-definite matrix $P \in \R^{n\times n}$, we let $\|\cdot\|_{P}$ be the $P$-weighted $\ell_2$ norm $\|x\|_{P} := \sqrt{x^\top P x}$, $x \in \R^n$ and write $\|\cdot\|_2$ if $P = I_n$. The corresponding lognorm is $\mu_{P}(A) =\min\setdef{b \in \R}{PA + A^\top P \preceq 2bP}$~\cite[Lemma~2.7]{FB:24-CTDS}.

\paragraph{Convex Analysis and Proximal Operators.}
Given a convex set $\mcC$, the function $\map{\iota_{\mcC}}{\R^n}{[0,+\infty]}$ is the \emph{zero-infinity indicator function on $\mcC$} and is defined by $\iota_{\mcC}(x) = 0$ if $x \in \mcC$ and $\iota_{\mcC}(x) = +\infty$ otherwise.  The function $\map{\relu}{\R}{\R_{\geq 0}}$, is defined by $\relu(x) = \max\{0, x\}$. A map $\map{g}{\R^n}{\realextended}$ is (i) \emph{convex} if $\operatorname{epi}(g) := \setdef{(x,y) \in \R^{n+1}}{g(x) \leq y}$ is a convex set; (ii) \emph{proper} if its value is never $-\infty$ and there exists at least one $x \in \R^n$ such that $g(x) < \infty$; (iii) \emph{closed} if it is proper and $\operatorname{epi}(g)$ is closed. {We denote by $\partial g$ the \emph{subdifferential} of $g$.}
A map $\map{g}{\R^n}{\realextended}$ is 
\begin{enumerate}[label = (\roman*)]
\item \emph{strongly convex with parameter $\rho > 0$} if the map $x \mapsto g(x) - \frac{\mf}{2}\|x\|_2^2$ is convex;
\item { \emph{$L$-smooth} if it is differentiable and $\nabla g$ is Lipschitz with constant $L >0$.}
\end{enumerate}

Next, we define the proximal operator of $g$, which is a map that takes a vector $x \in \R^n$ and maps it into a subset of $\R^n$, which can be either empty, contain a single element, or be a set with multiple vectors.
\bd[Proximal Operator]
\label{apx:def:prox_operator}
The \emph{proximal operator} of a function $\map{g}{\R^n}{\realextended}$ with parameter $\gamma>0$, $\map{\prox{\gamma g}}{\R^n}{\R^n}$, is the operator given by
\beq
 \prox{\gamma g}(x) = \displaystyle \argmin_{z \in \R^n} g(z) + \frac{1}{2 \gamma}\|x - z\|_2^2, \quad \forall x \in \R^n.
\eeq
\ed
The map $\prox{\gamma g}$ is firmly nonexpansive~\cite[Proposition~12.28]{HHB-PLC:17}.
The subdifferential operator of $g$, $\partial g$, is linked to the proximal operator $\prox{\gamma g}$ by the following relation
\beq
\prox{\gamma g} = (I_n + \gamma \partial g)^{-1}.
\eeq
The (point-to-point) map $(I_n + \gamma \partial g)^{-1}$ is called the \emph{resolvent} of the operator $\partial g$ with parameter $\gamma > 0$. That is, the proximal operator is the resolvent of the subdifferential operator. Moreover, when the function $g$ is closed, convex, and proper, then the resolvent, and so the proximal map, is single-valued, even though $\partial g$ is not.
{We denote by $D(h(\cdot))$ the Jacobian of a map $\map{h}{\R^n}{\R^m}$. We conclude recalling a well-known result on first-order necessary conditions for optimality (see, e.g.,~\cite{RTR:70}).
\bt[First-order necessary conditions]
\label{thm:FONC}
Consider problem~\eqref{eq:eq_constrained_non_smooth} and let $x^{\star} \in \R^{n}$ be a local minimum satisfying $h(x^{\star}) = \0_m$. Assume that $x^{\star}$ is regular, that is, the rows of $Dh(x^\star)$ are linearly independent. Then, there exists $\lambda^{\star} \in \R^{m}$ such that $(x^{\star}, \lambda^{\star})$ is a saddle point of the Lagrangian, that is,
$$
\0_n \in \nabla f(x^{\star}) + \partial g(x^{\star}) + D h(x^{\star})^{\top} \lambda^{\star}.
$$
\et
The regularity assumption in Theorem~\ref{thm:FONC}, namely that the rows of $D h(x^{\star})$ are linearly independent, is also known as the \emph{Linear Independence Constraint Qualification} (LICQ). Finally, we recall that a stationary point $(\bar x, \bar \lambda)$ of~\eqref{eq:eq_constrained_non_smooth} satisfies the first-order necessary conditions but is not necessarily a local or global minimizer unless additional assumptions hold (e.g., convexity of $f$ and $g$, and affine constraints).  
Conversely, any local minimizer of~\eqref{eq:eq_constrained_non_smooth} satisfying, for example, the LICQ, is a stationary point.
}
\subsection{Contraction Theory for Dynamical Systems.}
Consider a dynamical system 
\beq
\label{eq:dynamical_system}
\dot{x}(t) = f\bigl(t,x(t)\bigr),
\eeq 
where $\map{f}{\R_{\geq 0} \times \mcC}{\R^n}$, is a smooth nonlinear function with $\mcC\subseteq \R^n$ forward invariant set for the dynamics. We let $t \mapsto \odeflowtx{t}{x_0}$ be the flow map of~\eqref{eq:dynamical_system} at time $t$ starting from initial condition $x(0):= x_0$.
Then, we give the following~\cite{GR-MDB-EDS:10a, FB:24-CTDS}:
\bd[Contracting dynamics] \label{def:contracting_system}
Given a norm $\norm{\cdot}$ with associated lognorm $\mu$, a smooth function $\map{f}{\R_{\geq 0} \times \mcC}{\R^n}$, with $\mcC \subseteq \R^n$ $f$-invariant, open and convex, and a \emph{contraction rate} $c >0$ ($c = 0)$, $f$ is $c$-strongly (weakly) infinitesimally contracting on $\mcC$ if
\beq
\label{cond:contraction_log_norm}
\mu\bigl(Df(t, x)\bigr) \leq -c,
\eeq
for all $x \in \mcC  \textup{ and } t\in \R_{\geq0}$, where $Df(t,x) := \partial f(t,x)/\partial x$.
\ed

If $f$ is contracting, then for any two trajectories $x(\cdot)$ and $y(\cdot)$ of~\eqref{eq:dynamical_system} it holds that
$$\|\odeflowtx{t}{x_0} - \phi_t(y_0)\| \leq \e^{-ct}\|x_0 -y_0\|, \quad \textup{ for all } t \geq 0,$$
i.e., the distance between the two trajectories converges exponentially with rate $c$ if $f$ is $c$-strongly infinitesimally contracting, and never increases if $f$ is weakly infinitesimally contracting.

In~\cite[Theorem 16]{AD-AVP-FB:22q} condition~\eqref{cond:contraction_log_norm} is generalized for locally Lipschitz function, for which, by Rademacher’s theorem, the Jacobian exists almost everywhere (a.e.) in $\mcC$. Specifically, if $f$ is locally Lipschitz, then  $f$ is infinitesimally contracting on $\mcC$ if condition~\eqref{cond:contraction_log_norm} holds for almost every $x \in \mcC$ and $t\in \R_{\geq0}$.

Finally, we recall the following result on the convergence behavior of globally-weakly and locally-strongly contracting dynamics. We refer to~\cite{VC-AD-AG-GR-FB:24a} for more details.

\bt[Linear-exponential convergence of globally-weakly and locally-strongly contracting dynamics]
\label{thm:local-exp-contractivity_same_norms}
Consider the dynamics~\eqref{eq:dynamical_system}. Assume that: (i) $f$ is weakly infinitesimally contracting on $\R^n$ w.r.t. $\norm{\cdot}_{\glo}$; (ii) strongly infinitesimally contracting {with rate $\ce$} on a forward-invariant set $\mcS$ w.r.t. $\norm{\cdot}_{\loc}$; (iii) there exists an equilibrium point $\xstar \in \mcS$. Let $\radius$ be the largest radius satisfying $\ball{\loc}{\xstar, r} \subseteq \mcS$. For each trajectory $x(t)$ starting from $x_0$ it holds that
\begin{enumerate}
\item
\label{item:1_equal_norm}
if $x_0 \in \mcS$, then, for almost every $t \geq 0$,
\[
\norm{x(t) - \xstar} \leq \e^{-\ce t} \norm{x_0 - \xstar};
\]
\item
\label{item:2_equal_norm}
if $x_0 \notin \mcS$, then, for almost every $t \geq 0$,
\beq
\label{eq:bound_outside_the_ball}
\norm{x(t) - \xstar} \leq \linexp{t  ; q, \cl, \ce, \tld} :=
\begin{cases}
q - \cl t  \ & \textup{ if } t \leq \tld,\\
\bigl(q - \cl \tld\bigr) \e^{ - \ce (t - \tld)}\  & \textup{ if }  t > \tld,\\
\end{cases}
\eeq
where {the parameters $\cl$, $q$, and $\tld$ are given in~\cite{VC-AD-AG-GR-FB:24a}.}
\end{enumerate}
\et
For brevity, we say that every solution of~\eqref{eq:dynamical_system} satisfying Theorem~\ref{thm:local-exp-contractivity_same_norms} \emph{linear-exponentially converge} towards its equilibrium {with respect to the norms $\norm{\cdot}_{\glo}$ and $\norm{\cdot}_{\loc}$}.

\section{Equality-Constrained Composite Optimization via Feedback Control}
Consider the equality-constrained composite OP~\eqref{eq:eq_constrained_non_smooth} that we rewrite here for convenience
\begin{equation*}
\begin{aligned}
\min_{x \in \R^n} \ & f(x) + g(x)\\
\text{ s.t. } & h(x) = \0_m,
\end{aligned}
\end{equation*}
where $\map{f}{\R^n}{\R}$ and $\map{h}{\R^n}{\R^m}$ are differentiable functions, while $\map{g}{\R^n}{\R}$ is a convex, closed, and proper function, possibly non-smooth. Problem~\eqref{eq:eq_constrained_non_smooth} includes inequality-constrained optimization problems. To see this, consider the problem
\begin{align*}
\min_{x \in \R^n} \ & f(x)\\
\text{ s.t. } & g(x) \leq 0 \\ 
& h(x) = \0_m,
\end{align*}
and note that the above minimization problem can be equivalently rewritten as
\begin{align*}
\min_{x \in \R^n} \ & f(x) + \iota_{\setdef{x}{g(x) \leq 0}}(x)\\
\text{ s.t. } & h(x) = \0_m.
\end{align*}

Inspired by the approach in~\cite{VC-SMF-SP-DR:25}, to solve problem~\eqref{eq:eq_constrained_non_smooth}, we propose a continuous-time closed-loop dynamical system, for which we design a suitable feedback controller  driving the dynamics towards a minimizer of~\eqref{eq:eq_constrained_non_smooth}. To this end, consider the Lagrangian associated with the minimization problem~\eqref{eq:eq_constrained_non_smooth}, that is the map $\map{L}{\R^n \times \R^m}{\R}$
\begin{equation*}
L(x, \lambda)=f(x)+ g(x) + \lambda^{\top}h(x),
\end{equation*}
where $\lambda \in \R^m$ is the vector of Lagrange multipliers {associated with the equality constraint $h(x) = \0_m$}. 
\bd[Stationary point of~\eqref{eq:eq_constrained_non_smooth}]
Consider the equality-constrained OP~\eqref{eq:eq_constrained_non_smooth}. A \emph{stationary point for problem~\eqref{eq:eq_constrained_non_smooth}} is any pair $(x^{\star},\lambda^{\star}) \in \R^{n + m}$ such that $\0_n \in \nabla f(x^{\star}) + \partial g(\xstar) +  D\bigl(h(x^{\star})\bigr)^{\top}\lambda^{\star}$, and  $h\left(x^{\star}\right)= \0_m$.
\ed

\begin{rem}[{First-order necessary and sufficient conditions for convex OP}]
\label{rem1}
Assume that Problem~\eqref{eq:eq_constrained_non_smooth} is convex, that is, $f$ is convex and the constraints are affine, i.e.,  $h(x) = Ax - b$, with $A \in \R^{m \times n}$ and $b \in \R^m$.
By the first-order necessary and sufficient optimality conditions for convex optimization problems~\cite{RTR:70}, a vector $x^\star \in \R^n$ satisfying $A x^\star - b = \0_m$ is a minimizer of~\eqref{eq:eq_constrained_non_smooth} if and only if there exists $\lambda^\star \in \R^m$ such that $(x^\star, \lambda^\star)$ is a saddle point of the Lagrangian, that is $\0_n \in \nabla f(x^{\star}) + \partial g(\xstar) +  A^{\top}\lambda^{\star}$, and $A\xstar - b = \0_m$.
Therefore, if $(x^\star, \lambda^\star) \in \R^{n+m}$ is a stationary point of~\eqref{eq:eq_constrained_non_smooth}, then $x^\star$ is a global minimizer of the problem.
\end{rem}

Interpreting the Lagrange multipliers $\lambda(t) \in \R^m$ as a control input, we consider the following proximal forward–backward dynamics
\beq
\label{eq:system_prox_FB_controlled}
\begin{cases}
\dot{x}(t)= - x(t) + \prox{\gamma g}\bigl(x(t) - \gamma(\nabla f(x(t)) + D\bigl(h(x(t))\bigr)^{\top}\lambda(t))\bigr)\\
y(t)=h(x(t)),
\end{cases}
\eeq
where $x \in \R^n$ is the state, $y \in \R^m$ is the output and $\gamma >0$ is a parameter.
This parameter regulates the influence of $\nabla f$ and the feedback term $ D\bigl(h(x)\bigr)^{\top}\lambda$ within the proximal operator and affects the convergence properties (see Section~\ref{sec:convergence}).

We assume that Problem~\eqref{eq:eq_constrained_non_smooth} admits at least one feasible point, that is, there exists $\bar x$ such that $h(\bar x)=\0_m$.
The following result establishes the connection between the stationary point of problem~\eqref{eq:eq_constrained_non_smooth} and the equilibrium points of the dynamics~\eqref{eq:system_prox_FB_controlled}.

\begin{lem}[Linking the stationary point of~\eqref{eq:eq_constrained_non_smooth} and the equilibria of~\eqref{eq:system_prox_FB_controlled}]
\label{lem:equivalence}
A point $\left(x^\star, \lambda^\star\right) \in \R^{n+m}$ is a stationary point of~\eqref{eq:eq_constrained_non_smooth} if and only if is an equilibrium point of system~\eqref{eq:system_prox_FB_controlled} with input $\lambda^\star$, satisfying $h(x^\star) = \0_m$. 
\end{lem}
\begin{proof}
Let $\left(x^\star, \lambda^\star\right) \in \R^{n+m}$ be a stationary point of~\eqref{eq:eq_constrained_non_smooth}. Then $\0_n \in \nabla f(x^{\star}) + \partial g(\xstar) +  D\bigl(h(x^{\star})\bigr)^{\top}\lambda^{\star}$.
Multiplying both sides by $\gamma >0$ and adding and subtracting $x^\star$ to the right-hand side of the above inclusion yields
\[
\begin{aligned}
\0_n \in [I_n + \gamma\partial g](\xstar) {+} \gamma \nabla f(\xstar) {+} \gamma D\bigl(h(x^{\star})\bigr)^{\top}\lambda^{\star} {-} \xstar &\iff (I_n + \gamma\partial g)(\xstar) \ {\ni} \ \xstar - \gamma \bigl(\nabla f(\xstar) + D\bigl(h(x^{\star})\bigr)^{\top}\lambda^{\star}\bigr)\\
&\iff \xstar \in (I_n + \gamma\partial g)^{-1}\left(\xstar - \gamma \bigl(\nabla f(\xstar) + D\bigl(h(x^{\star})\bigr)^{\top}\lambda^{\star}\bigr)\right).
\end{aligned}
\]
Recalling that $\prox{\gamma g} = (I_n + \gamma\partial g)^{-1}$ and, being by assumption $g$ convex, closed, and proper, then $\prox{\gamma g}$ is single-valued~\cite{NP-SB:14}. Therefore, we have
$$
\xstar = \prox{\gamma g}\bigl(\xstar - \gamma(\nabla f(\xstar) + D\bigl(h(x^{\star})\bigr)^{\top}\lambda^{\star})\bigr).
$$
That is, $\xstar$ is an equilibrium point of problem~\eqref{eq:system_prox_FB_controlled}. Specifically, $\xstar$ is the equilibrium with input $\lambda^\star$.
Since all steps are equivalence, the proof is complete.
\end{proof}
To compute the minimizers of~\eqref{eq:eq_constrained_non_smooth}, we design a control input $\lambda(t)$ driving~\eqref{eq:system_prox_FB_controlled} to an equilibrium $x^\star$, which is a stationary point of~\eqref{eq:eq_constrained_non_smooth}, and regulating the output to zero, ensuring $x^\star$ is feasible. Specifically, $\lambda(t)$ is the output of a PI controller, so that:
\begin{equation*}
    \lambda(t)=\subscr{k}{p} y(t) + \subscr{k}{i} \int_{0}^{t} y(\tau) d\tau,
\end{equation*}
where $\subscr{k}{p}, \subscr{k}{i} \in \R_{>0}$ are the control gains. Differentiating, we have
\beq
\label{eq:pi_controller}
\dot \lambda(t) = \subscr{k}{p} \dot y(t) + \subscr{k}{i} y(t).
\eeq
Then, the closed-loop dynamics composed by system~\eqref{eq:system_prox_FB_controlled} and the \PI controller~\eqref{eq:pi_controller} -- illustrated in Figure~\ref{fig:pi-pgd_affine} -- is the following continuous-time \emph{PI proximal-gradient dynamics} (PI--PGD)
\[
\begin{cases}
\dot{x} = - x + \prox{\gamma g}\bigl(x - \gamma(\nabla f(x) + D\bigl(h(x)\bigr)^{\top}\lambda)\bigr) \\
\dot \lambda = \subscr{k}{p} D\bigl(h(x)\bigr) \dot x + \subscr{k}{i} h(x),
\end{cases}
\]
or equivalently
\beq
\label{eq:pi-pgd}
\begin{cases}
\dot{x} & = - x + \prox{\gamma g}\bigl(x - \gamma(\nabla f(x) + D\bigl(h(x)\bigr)^{\top}\lambda)\bigr) \\
\dot \lambda & = \subscr{k}{p} D\bigl(h(x)\bigr) \bigl(- x + \prox{\gamma g}\bigl(x - \gamma(\nabla f(x) + D\bigl(h(x)\bigr)^{\top}\lambda)\bigr) \bigr)+ \subscr{k}{i} h(x).
\end{cases}
\eeq
\begin{figure}[!h]
\centering
\includegraphics[width=.85\linewidth]{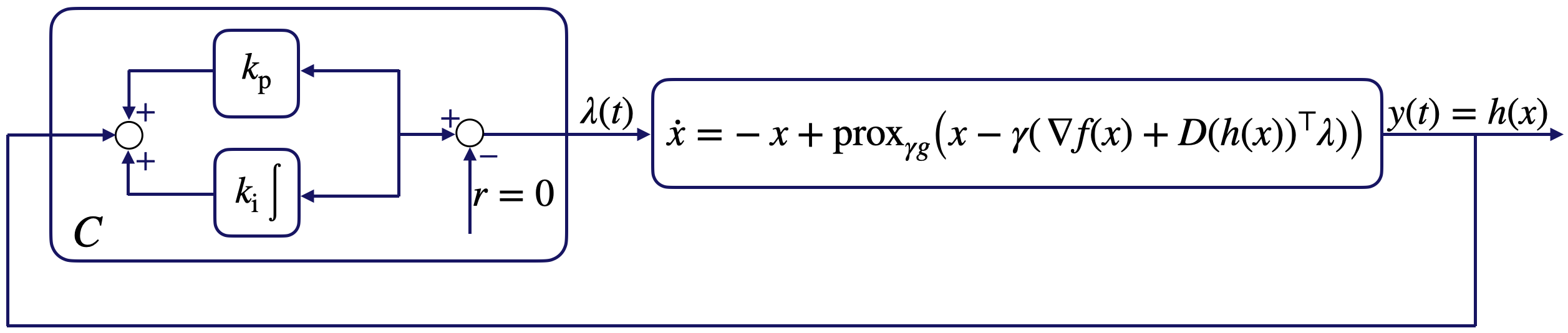}
\caption{PI--PGD: Closed-loop dynamics composed by system~\eqref{eq:system_prox_FB_controlled} and the \PI controller~\eqref{eq:pi_controller}.}
\label{fig:pi-pgd_affine}
\end{figure}

The following result is an immediate consequence of Lemma~\ref{lem:equivalence}.
\begin{cor}
\label{lem:equivalence_dynamics}
Assume that Problem~\eqref{eq:eq_constrained_non_smooth} {is convex}. Then $(\xstar, \lambda^\star) \in \R^n \times \R^m$ is an equilibrium point of~\eqref{eq:pi-pgd} if and only if $\xstar$ is a minimizer of~\eqref{eq:eq_constrained_non_smooth}.
\end{cor}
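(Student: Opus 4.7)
The plan is to reduce the statement to Lemma~\ref{lem:equivalence_FB} by showing that the equilibria of the closed-loop PI--PGD are exactly the stationary points of~\eqref{eq:eq_constrained_non_smooth}, and then invoke Remark~\ref{rem1} to translate between stationary points and global minimizers under convexity. Since the corollary is flagged as an immediate consequence of Lemma~\ref{lem:equivalence_FB}, the only genuinely new ingredient is analyzing the second equation of~\eqref{eq:pi-pgd} to extract feasibility automatically from the equilibrium condition on $\lambda$.

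For the forward direction, I would take $(\xstar,\lambda^\star)$ to be an equilibrium of~\eqref{eq:pi-pgd}. The first equation, $\dot x=0$, gives
\[
\xstar = \prox{\gamma g}\bigl(\xstar - \gamma(\nabla f(\xstar) + D(h(\xstar))^{\top}\lambda^\star)\bigr),
\]
i.e., $\xstar$ is an equilibrium of the plant~\eqref{eq:system_prox_FB_controlled} with input $\lambda^\star$. Substituting $\dot x=0$ into the second equation of~\eqref{eq:pi-pgd} collapses $\dot\lambda=0$ to $\subscr{k}{i}h(\xstar)=\0_m$, and since $\subscr{k}{i}>0$ this forces $h(\xstar)=\0_m$. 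Thus the hypotheses of Lemma~\ref{lem:equivalence_FB} are satisfied, which yields that $(\xstar,\lambda^\star)$ is a stationary point of~\eqref{eq:eq_constrained_non_smooth}. Because the problem is convex in the sense of Remark~\ref{rem1} (i.e., $f$ convex, $g$ convex closed proper, $h$ affine), the first-order stationarity condition is also sufficient for global optimality, so $\xstar$ is a minimizer.

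For the converse, I would start from $\xstar$ being a minimizer of~\eqref{eq:eq_constrained_non_smooth}. Invoking Remark~\ref{rem1} again, there exists $\lambda^\star\in\R^m$ such that $\0_n \in \nabla f(\xstar)+\partial g(\xstar)+D(h(\xstar))^{\top}\lambda^\star$ and $h(\xstar)=\0_m$, i.e., $(\xstar,\lambda^\star)$ is a stationary point. Lemma~\ref{lem:equivalence_FB} then provides that $\xstar$ is an equilibrium of~\eqref{eq:system_prox_FB_controlled} with input $\lambda^\star$, hence the right-hand side of the first equation of~\eqref{eq:pi-pgd} vanishes. Plugging this into the second equation and using $h(\xstar)=\0_m$ yields $\dot\lambda = \subscr{k}{p}D(h(\xstar))\cdot\0_n+\subscr{k}{i}\cdot\0_m = \0_m$, so $(\xstar,\lambda^\star)$ is an equilibrium of the full PI--PGD.

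I do not anticipate a substantive obstacle: the argument is essentially bookkeeping around Lemma~\ref{lem:equivalence_FB}, with the only mildly delicate point being to make sure the equilibrium of the $\lambda$-dynamics is properly reduced using $\dot x=0$ (otherwise one might mistakenly require $h(\xstar)$ to lie in $\Ker D(h(\xstar))$ rather than concluding $h(\xstar)=\0_m$ directly). It is also worth making explicit, for clarity, that ``$\xstar$ is a minimizer'' in the statement is to be read as ``there exists $\lambda^\star$ such that $(\xstar,\lambda^\star)$ is an equilibrium,'' since the Lagrange multiplier is uniquely determined only under additional regularity (e.g., LICQ) on the affine constraint $Ax=b$.
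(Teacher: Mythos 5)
Your proposal is correct and follows essentially the same route as the paper: both reduce the corollary to Lemma~\ref{lem:equivalence_FB} and Remark~\ref{rem1}, with the key observation that at an equilibrium of~\eqref{eq:pi-pgd} the first equation gives $\dot x = \0_n$, so the second collapses to $\subscr{k}{i}h(\xstar)=\0_m$ and hence $h(\xstar)=\0_m$. You merely spell out the bookkeeping that the paper leaves implicit.
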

\begin{proof}
The claim follows directly from Remark~\ref{rem1} and Lemma~\ref{lem:equivalence}, noting that if $(\xstar, \lambda^\star)$ is an equilibrium point of~\eqref{eq:pi-pgd}, then $h(\xstar) = \0_m$. 
\end{proof}

\begin{rem}[Handling nonlinear inequality constraints via slack variables]
Consider an optimization problem of the form
\[
\begin{aligned}
\min_{x \in \R^n} \ & f(x) + g(x)\\
\text{ s.t. } & h(x) = \0_m, \\
& q(x) \leq \0_r,
\end{aligned}
\]
where $f$ and $h$ satisfy the same assumptions as in Problem~\eqref{eq:eq_constrained_non_smooth}, and $q : \R^n \to \R^r$ is possibly nonlinear. The problem above can be equivalently rewritten in the form of Problem~\eqref{eq:eq_constrained_non_smooth}  by introducing slack variables $s \in \R^r_{\ge 0}$:
\[
\begin{aligned}
\min_{x \in \R^n, s \in \R^r} \ & f(x) + g(x) + \iota_{\R^r_{\geq 0}}(s)\\
\text{ s.t. } & {\tilde h}(x, s) = \0_{m + r},
\end{aligned}
\]
where $\tilde{h}(x,s) := \begin{bmatrix} h(x) \\ q(x) + s \end{bmatrix}$. The corresponding PI--PGD dynamics for the augmented variable $(x,s)$ are
\[
\begin{cases}
\dot{x} = -x + \prox{\gamma g}\big(x - \gamma (\nabla f(x) + D h(x)^\top \lambda_h + D q(x)^\top \lambda_q)\big),\\[1mm]
\dot{s} = -s + \relu\big(s - \gamma \lambda_q\big),\\[1mm]
\dot{\lambda}_h = k_p D h(x) \dot{x} + k_i h(x),\\[1mm]
\dot{\lambda}_q = k_p (D q(x) \dot{x} + \dot{s}) + k_i (q(x) + s),
\end{cases}
\]
where $(\lambda_h, \lambda_q)$ denote the dual variables associated with the equality and transformed inequality constraints, respectively.
Such structure-preserving reformulations are particularly useful when the feasible set is nonconvex or lacks a closed-form projection.
\end{rem}

\section{Convergence of the PI Proximal-Gradient Dynamics}
\label{sec:convergence}
We now focus on the case of affine constraints and study the convergence properties of the resulting PI--PGD. That is, we let $h(x) = Ax - b$, where $A \in \R^{m \times n}$, $b \in \R^m$.
The PI--PGD then becomes
\beq
\label{eq:pi-pgd_affine}
\begin{cases}
\dot{x} = - x + \prox{\gamma g}\left(x - \gamma(\nabla f(x) + A^{\top}\lambda)\right) \\
\dot \lambda = (\subscr{k}{i} - \subscr{k}{p})Ax + \subscr{k}{p}A\prox{\gamma g}\left(x - \gamma(\nabla f(x) + A^{\top}\lambda)\right) - \subscr{k}{i}b.
\end{cases}
\eeq
In what follows, we let $z = (x,\lambda) \in \R^{n + m}$ and let $\map{\subscr{F}{PGD}}{\R^{n+m}}{\R^{n+m}}$ be the vector field~\eqref{eq:pi-pgd_affine} for $\dot{z} = \subscr{F}{PGD}(z)$.
Additionally, given a point $z^\star \in \R^{n+m}$, we let $\subscr{\Omega}{F}(z^\star)$ be the set of differentiable points of $\subscr{F}{PGD}$ in a neighborhood of $z^\star$.

We make the following standard assumptions on the function $f$ and the matrix $A$.
\begin{assumption*}
For the dynamics~\eqref{eq:pi-pgd_affine}, assume
\begin{enumerate}[label=\textup{($A$\arabic*)}]
\item 
\label{ass:1_f}
the function $\map{f}{\R^n}{\R}$ is strongly convex with parameter $\rho$ {and $L$-smooth};
\item
\label{ass:2_A}
the matrix $A \in \R^{m \times n}$ has full row rank and satisfies $\amin I_m \preceq AA^\top \preceq \amax I_m$ for $\amin,\amax \in \R_{>0}$.
\end{enumerate}
\end{assumption*}
Under Assumptions~\ref{ass:1_f} and~\ref{ass:2_A}, the OP~\eqref{eq:eq_constrained_non_smooth} has a unique solution. As a consequence, these assumptions are standard in the literature when establishing global convergence to the equilibrium (see, e.g.,~\cite{GQ-NL:19, NKD-SZK-MRJ:19, AD-VC-AG-GR-FB:23f_corrected, VC-SMF-SP-DR:25}).
Regarding the set of associated Lagrange multipliers $\Lambda^\star$, Assumption~\ref{ass:2_A}, together with the convexity assumptions and feasibility, guarantees that this set is nonempty. Moreover, $\Lambda^\star$ is convex. However, it is generally not a singleton when $g$ is non-smooth. Indeed, at optimality, the stationary condition reads $A^\top\lambda^\star \in -\nabla f(x^\star) - \partial g(x^\star)$. By Assumption~\ref{ass:2_A}, $A$ has full row rank, implying that $A^\top$ is injective. Then, $\lambda^\star$ is unique if and only if the affine subspace $-\nabla f(x^\star) + \mathcal{R}(A^\top)$ intersects the subdifferential $\partial g(x^\star)$ at exactly one point, where $\mathcal{R}(A^\top)$ denotes the column space of $A^\top$. A sufficient condition for this is, e.g., smoothness of $g$ at $x^\star$, which reduces $\partial g(x^\star)$ to a singleton and ensures a unique intersection. More generally, the uniqueness of $\lambda^\star$ can also be established for certain classes of non-smooth functions $g$ under suitable strict complementarity conditions~\cite{FHC:83}.

We begin our analysis by showing that, for proper parameter choice, the PI--PGD is weakly infinitesimally contracting, which in turn implies that the distance between any two trajectories of the PI--PGD never increases.
\begin{lem}[Global weak contractivity of~\eqref{eq:pi-pgd_affine}]
\label{thm:weak-contractivity}
Consider the PI--PGD~\eqref{eq:pi-pgd_affine} with $\subscr{k}{p} = \subscr{k}{i} >0$ satisfying Assumption~\ref{ass:1_f}. For any $p \in \left[\max\left(\frac{\subscr{k}{p}L}{3}, \frac{\subscr{k}{p}(1 - 2\gamma \rho)}{\gamma}\right), \frac{\subscr{k}{p}}{\gamma}\right]$ and for any $\gamma \in \left]0, \frac{1}{L}\right]$, the PI--PGD~\eqref{eq:pi-pgd_affine} is weakly infinitesimally contracting on $\R^{n+m}$ with respect to the norm $\norm{\cdot}_{P}$, where 
\beq
\label{eq:matrix_P}
{P =}
\begin{bmatrix}
p I_n & 0\\
0 & I_m
\end{bmatrix}.
\eeq
\end{lem}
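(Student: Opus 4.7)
The plan is to establish $\mu_P(DF_{\text{PGD}}(z)) \leq 0$ almost everywhere on $\R^{n+m}$ and then invoke the locally Lipschitz extension of Definition~\ref{def:contracting_system} (Theorem~16 of~\cite{AD-AVP-FB:22q}). The vector field $F_{\text{PGD}}$ is locally Lipschitz because $\prox{\gamma g}$ is firmly nonexpansive (hence $1$-Lipschitz), $\nabla f$ is $L$-Lipschitz by Assumption~\ref{ass:1_f}, and the remaining blocks depending on $A$ are affine. By Rademacher's theorem, $DF_{\text{PGD}}$ exists almost everywhere, and at each such point I can compute a closed-form Jacobian. Setting $H=\nabla^2 f(x)$, $M=I_n-\gamma H$, and $J_g=D\prox{\gamma g}(\cdot)$, and using that $\prox{\gamma g}$ coincides with the gradient of the Moreau envelope of $g$ (a convex, $(1/\gamma)$-smooth function), $J_g$ is symmetric with spectrum in $[0,1]$; Assumption~\ref{ass:1_f} with $\gamma\in(0,1/L]$ moreover gives $0 \preceq M \preceq (1-\gamma\rho) I_n$. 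A direct block computation then yields
\[
DF_{\text{PGD}} \;=\; \begin{pmatrix} -I_n + J_g M & -\gamma J_g A^\top \\ k_p A J_g M & -\gamma k_p A J_g A^\top \end{pmatrix}.
\]

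The condition $\mu_P(DF_{\text{PGD}}) \leq 0$ is equivalent to the block LMI
\[
PJ + J^\top P \;=\; \begin{pmatrix} p\bigl(-2I_n + J_g M + M J_g\bigr) & (k_p M - p\gamma I_n) J_g A^\top \\ A J_g (k_p M - p\gamma I_n) & -2\gamma k_p\, A J_g A^\top \end{pmatrix} \;\preceq\; 0.
\]
To handle the cross block I would apply a matrix Young's inequality: for $(u,v)\in\R^{n+m}$, write the off-diagonal contribution as $2\langle J_g^{1/2}(k_p M - p\gamma I_n)u,\; J_g^{1/2} A^\top v\rangle$ and bound it by $\alpha\|J_g^{1/2}(k_p M - p\gamma I_n)u\|^2 + \alpha^{-1}\|J_g^{1/2} A^\top v\|^2$. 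Choosing $\alpha = 1/(2\gamma k_p)$ makes the induced dual quadratic cancel exactly with $-2\gamma k_p\, v^\top A J_g A^\top v$, reducing the LMI to the primal-only inequality
\[
p\bigl(-2I_n + J_g M + M J_g\bigr) + \frac{1}{2\gamma k_p}(k_p M - p\gamma I_n) J_g (k_p M - p\gamma I_n) \;\preceq\; 0.
\]
After multiplying by $2\gamma/k_p$ and substituting $q := p\gamma/k_p$, the expression rearranges into the compact form
\[
(M + q I_n)\, J_g\, (M + q I_n) \;\preceq\; 4q\, I_n,
\]
which I would verify using $J_g \preceq I_n$, so that it suffices to check $(M+qI_n)^2 \preceq 4q I_n$, i.e.\ the scalar condition $(\mu+q)^2 \leq 4q$ for every eigenvalue $\mu$ of $M$ in $[0, 1-\gamma\rho]$. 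Combining this scalar inequality with the lower and upper bounds on $p$ stated in the lemma yields the claimed range, hence $\mu_P(DF_{\text{PGD}}) \leq 0$ a.e.\ on $\R^{n+m}$.

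The main obstacle is the non-commutativity among $J_g$, $H$, and $A^\top A$: the block Jacobian does not admit a common eigenbasis, so a purely spectral argument is not available, and the cross-block term cannot be absorbed diagonally. The matrix Young's inequality is the key tool that sidesteps this by decoupling primal and dual directions at the cost of a free parameter $\alpha$; the specific thresholds $k_p L/3$ and $k_p(1-2\gamma\rho)/\gamma$ in the admissible interval for $p$ arise from balancing this parameter against the spectral bounds on $M$ so that the scalar inequality $(\mu+q)^2 \leq 4q$ holds uniformly over $\mu\in[0,1-\gamma\rho]$. Once the LMI is validated at almost every differentiability point, Rademacher's theorem and~\cite[Theorem~16]{AD-AVP-FB:22q} deliver global weak infinitesimal contractivity in the weighted norm $\|\cdot\|_P$.
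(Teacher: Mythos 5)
Your proposal is correct and certifies the stated parameter range, but it follows a genuinely different route from the paper. The paper computes the same a.e.\ Jacobian (with $0\preceq G\preceq I_n$, $\rho I_n\preceq B\preceq LI_n$), then factors $-J^\top P-PJ$ as $\operatorname{diag}(I_n,A)\,Q\,\operatorname{diag}(I_n,A^\top)$ and proves $Q\succeq 0$ by a Schur complement with respect to the block $Q_{11}$: the bound $Q_{11}\succ\tfrac{3}{2}\gamma pB$ is obtained from the auxiliary Lemma~\ref{lemma:BoundQ11} (borrowed from~\cite{GQ-NL:19}), which handles the non-commuting term $X\diag{g}+\diag{g}X$, and the Schur complement $Q_{22}-Q_{12}^\top Q_{11}^{-1}Q_{12}$ is then bounded through the decomposition $Q_{12}=BR_1+R_2$ and a chain of LMI estimates (using $G^2\preceq G$, $GB^{-1}G\preceq\rho^{-1}G^2$, etc.), which is also where the necessity of $\subscr{k}{p}=\subscr{k}{i}$ surfaces. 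You instead keep the full $(n+m)$-dimensional LMI and decouple the cross block with a matrix Young inequality whose parameter $\alpha=1/(2\gamma \subscr{k}{p})$ exactly cancels the dual block $-2\gamma \subscr{k}{p}AJ_gA^\top$ — effectively a Schur complement with respect to the $(2,2)$ block that remains valid even when $AJ_gA^\top$ is singular — reducing everything to the single inequality $(M+qI_n)J_g(M+qI_n)\preceq 4qI_n$ with $q=p\gamma/\subscr{k}{p}$, which $J_g\preceq I_n$ turns into the scalar quadratic $(\mu+q)^2\leq 4q$ over the spectrum of $M$. I verified your algebra ($PJ+J^\top P$, the cancellation, and the rearrangement to the compact form) and it is exact. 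Your route avoids the external lemma, the inversion of $Q_{11}$, and the $R_1,R_2$ bookkeeping, and in fact certifies weak contractivity on the larger interval $q\in\bigl[(1-\sqrt{\gamma\rho})^2,(1+\sqrt{\gamma\rho})^2\bigr]$, of which the lemma's interval is a subset; the price is that, unlike the paper's derivation, it does not by itself reveal why $\subscr{k}{p}=\subscr{k}{i}$ is imposed (you assume it from the start, which the lemma permits).

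Two touch-ups are needed to make the plan a complete proof. First, the final step you leave implicit should be carried out: the admissible $q$'s for $(\mu+q)^2\leq 4q$ uniformly over $\mu\in[0,1-\gamma\rho]$ are exactly $[(1-\sqrt{\gamma\rho})^2,(1+\sqrt{\gamma\rho})^2]$, so one must check $\max\bigl(\gamma L/3,\,1-2\gamma\rho\bigr)\geq(1-\sqrt{\gamma\rho})^2$; this holds by splitting on $\gamma\rho\leq 4/9$ (where $1-2\gamma\rho-(1-\sqrt{\gamma\rho})^2=\sqrt{\gamma\rho}\,(2-3\sqrt{\gamma\rho})\geq 0$) versus $\gamma\rho>4/9$ (where $\gamma L/3\geq\gamma\rho/3>4/27>1/9>(1-\sqrt{\gamma\rho})^2$), while $q\leq 1\leq(1+\sqrt{\gamma\rho})^2$ is immediate. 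Second, your justification that $J_g$ is symmetric with spectrum in $[0,1]$ is right in substance but misstated: $\prox{\gamma g}$ is not the gradient of the Moreau envelope but satisfies $\prox{\gamma g}=\id-\gamma\nabla e_{\gamma}$ with $e_\gamma$ the Moreau envelope, i.e., it is the gradient of the convex $1$-smooth function $\tfrac12\norm{\cdot}_2^2-\gamma e_{\gamma}(\cdot)$; the paper simply cites~\cite[Lemma~19]{AD-VC-AG-GR-FB:23f_corrected} for this fact, and you may do the same.
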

\begin{proof}
For simplicity of notation, let $y := x - \gamma(\nabla f(x) + A^\top \lambda)$, $G(y) := D\prox{\gamma g}(y)$ {and $B(x) := \Hess f(x)$}. Recall that {(i)} $G(y)$ is symmetric and $\prox{\gamma g}$ is nonexpansive {(see, e.g.,~\cite[Lemma 19]{AD-VC-AG-GR-FB:23f_corrected})}, thus $0 \preceq G(y) \preceq I_n$, for a.e. $y$; {(ii) Assumption~\ref{ass:1_f} implies $\rho I_n \preceq B(x) \preceq L I_n$ for all $x \in \R^n$}.
The Jacobian of $\subscr{F}{PGD}$ is
$$
D{\subscr{F}{PGD}}(z) = 
\begin{bmatrix}
- I_n + G(y)(I_n - \gamma B(x))
& -\gamma G(y) A^\top \\
(\subscr{k}{i} - \subscr{k}{p})A + \subscr{k}{p}AG(y)\bigl(I_n - \gamma B(x)\bigr) & -\gamma \subscr{k}{p}AG(y)A^\top
\end{bmatrix},
$$
which exists for almost every $z$. To prove our statement, we have to show that $\mu_{P}(D \subscr{F}{PGD}(z)) \leq 0$, for a.e. $z$. Note that
\begin{multline*}
{
\sup_{z} \mu_{P}(D \subscr{F}{PGD}(z)) \leq \max_{\substack{0 \preceq G \preceq I_n \\ \rho I_n \preceq B \preceq L I_n}}\mu_{P}
\Biggl(
\underbrace{
\begin{bmatrix}
- I_n + G(I_n - \gamma B)
& -\gamma G A^\top \\
(\subscr{k}{i} - \subscr{k}{p})A + \subscr{k}{p}AG\bigl(I_n - \gamma B\bigr) & -\gamma \subscr{k}{p}AGA^\top
\end{bmatrix}
}_{:= J}
\Biggr),
}
\end{multline*}
where the $\sup$ is over all $z$ for which $D\subscr{F}{PGD}(z)$ exists. Then, to prove our statement it suffices to show that the LMI  $ -J^\top P - P J \succeq 0$ is satisfied. We compute
\begin{align*}
- J^\top P &- PJ
=
\begin{bmatrix}
2p I_n - 2p G + p \gamma(BG + GB) 
& (\subscr{k}{p} - \subscr{k}{i})A^\top + \bigl(\gamma p I_n - \subscr{k}{p}(I_n - \gamma B)\bigr) G A^\top \\
(\subscr{k}{p} - \subscr{k}{i})A + AG\bigl(\gamma p I_n - \subscr{k}{p}(I_n - \gamma B)\bigr)
& 2 \gamma \subscr{k}{p}AGA^\top 
\end{bmatrix} \\
&=
\begin{bmatrix}
I_n & 0 \\
0 & A
\end{bmatrix}
\begin{bmatrix}
2p I_n - 2p G + p \gamma(BG + GB) 
& (\subscr{k}{p} - \subscr{k}{i})I_n + \bigl(( \gamma p - \subscr{k}{p}) I_n + \gamma \subscr{k}{p} B\bigr) G\\
(\subscr{k}{p} - \subscr{k}{i})I_n +  G\bigl( (\gamma p - \subscr{k}{p}) I_n + \gamma \subscr{k}{p} B\bigr)
& 2 \gamma \subscr{k}{p}G 
\end{bmatrix}
\begin{bmatrix}
I_n & 0 \\
0 & A^\top
\end{bmatrix}\\
&:=
\begin{bmatrix}
I_n & 0 \\
0 & A
\end{bmatrix}
\underbrace{
\begin{bmatrix}
Q_{11} & Q_{12}\\
Q_{12}^\top &Q_{22}
\end{bmatrix}
}_{:= Q}
\begin{bmatrix}
I_n & 0 \\
0 & A^\top
\end{bmatrix}
\succeq 0 \quad \iff \quad Q \succeq 0,
\end{align*}
where we have introduced the matrix $Q$ with components
\begin{align*}
Q_{11} &= 2p I_n - 2p G + p \gamma(BG + GB);\\
Q_{12} &= (\subscr{k}{p} - \subscr{k}{i})I_n + \bigl(( \gamma p - \subscr{k}{p}) I_n + \gamma \subscr{k}{p} B\bigr) G;\\
Q_{22} &= 2 \gamma \subscr{k}{p}G.
\end{align*}
To show $Q \succeq 0$, we need to prove that (i) $Q_{11} \succ 0$ and (ii) the Schur complement of the block $Q_{11}$ is positive semidefinite, that is, $Q_{22} - Q_{12}^\top Q_{11}^{-1}Q_{12} \succeq 0$.
\paragraph{(i) $\mathbf{Q_{11} \succ 0}$.}
We begin by noting that $G$ is symmetric and satisfies $0 \preceq G \preceq I_n$, then there exists $U$ satisfying $ U U^\top= U^\top U=I_n$ and $g \in [0,1]^n$ such that $G = U\diag{g}U^\top$.
Substituting this equality into $Q_{11}$ and multiplying on the left and on the right by $U^\top$ and $U$, respectively, we get
\begin{align*}
U^\top Q_{11} U &= 2p I_n - 2p \diag{g} + p\gamma(U^\top B U\diag{g}  + \diag{g}U^\top BU) := 2 p\bigl(I_n - \diag{g}\bigr) + p\gamma(X\diag{g}  + \diag{g}X),
\end{align*}
where in the last equality we defined the matrix $X := U^\top B U$. Note that $X = X^\top$, $0 \prec \rho I_n \preceq B \preceq L I_n$ and, by assumption, $\gamma \leq \frac{1}{L}$. Then, by applying Lemma~\ref{lemma:BoundQ11}, for all $g \in [0,1]^n$ it holds
\begin{align}
\label{ineq:q11}
p \gamma(X\diag{g}  + \diag{g}X) + 2p\bigl(I_n - \diag{g}\bigr) \succ \frac{3}{2}p \gamma X.
\end{align}
By multiplying~\eqref{ineq:q11} on the left and on the right by $U$ and $U^\top$, respectively, we get the following lower bound on $Q_{11}$
\begin{align*}
Q_{11} \succ \frac{3}{2}\gamma p B \succeq \frac{3}{2}\gamma \rho p I_n \succ 0.
\end{align*}
{which in turn implies the bound $\frac{3\gamma p}{2}Q_{11}^{-1} \prec B^{-1}$.}
\paragraph{(ii) $\mathbf{Q_{22} - Q_{12}^\top Q_{11}^{-1}Q_{12} \succeq 0}$.} Let 
$Q_{12} = (\subscr{k}{p} - \subscr{k}{i})I_n + \gamma p G - \subscr{k}{p}G + \gamma \subscr{k}{p} BG := B R_1 + R_2
$,
where to simplify the notation we have introduced the matrices $R_1 := \gamma { \subscr{k}{p}} G = R_1^\top$, and  $R_2 := (\subscr{k}{p} - \subscr{k}{i})I_n + (\gamma p - \subscr{k}{p})G = R_2 ^ \top$. Then 
\begin{align*}
\frac{3 \gamma p}{2}Q_{12}^\top Q_{11}^{-1}Q_{12} &\preceq (B R_1 + R_2)^\top B^{-1} (B R_1 + R_2)  \preceq (R_1 B + R_2)B^{-1} (B R_1 + R_2) \\
& = R_1 B R_1 + 2R_1 R_2 + R_2 B^{-1} R_2,
\end{align*}
where 
\begin{align*}
R_1 B R_1 &= \gamma^2 \subscr{k}{p}^2 G B G \preceq \gamma^2 \subscr{k}{p}^2 L G^2\\
R_1 R_2 &= \gamma \subscr{k}{p}(\subscr{k}{p} - \subscr{k}{i}) G + \gamma \subscr{k}{p}(\gamma p - \subscr{k}{p}) G^2 \\
R_2 B^{-1} R_2 &= (\subscr{k}{p} - \subscr{k}{i})^2B^{-1} + (\gamma p - \subscr{k}{p})^2GB^{-1}G + (\subscr{k}{p} - \subscr{k}{i})(\gamma p - \subscr{k}{p}) B^{-1}G + (\subscr{k}{p} - \subscr{k}{i})(\gamma p - \subscr{k}{p})GB^{-1}\\
&\preceq (\subscr{k}{p} - \subscr{k}{i})^2 \rho^{-1}I_n + (\gamma p - \subscr{k}{p})^2\rho^{-1}G^2 + (\subscr{k}{p} - \subscr{k}{i})(\gamma p - \subscr{k}{p}) B^{-1}G + (\subscr{k}{p} - \subscr{k}{i})(\gamma p - \subscr{k}{p})GB^{-1},
\end{align*}
where in the last inequality we used the fact that the LMI $L^{-1} I_n \preceq B^{-1} \preceq \rho^{-1} I_n$ implies $L^{-1}G^2 I_n \preceq GB^{-1}G \preceq \rho^{-1} G^2$.
Summing up, so far we have 
\begin{align*}
\frac{3 \gamma p}{2}Q_{12}^\top Q_{11}^{-1}Q_{12} & \preceq \gamma^2 \subscr{k}{p}^2 L G^2 + 2 \gamma \subscr{k}{p}(\subscr{k}{p} - \subscr{k}{i}) G + 2\gamma \subscr{k}{p}(\gamma p - \subscr{k}{p}) G^2 + (\subscr{k}{p} - \subscr{k}{i})^2 \rho^{-1}I_n \\
&\quad + (\gamma p - \subscr{k}{p})^2\rho^{-1} G^2 + (\subscr{k}{p} - \subscr{k}{i})(\gamma p - \subscr{k}{p}) B^{-1}G + (\subscr{k}{p} - \subscr{k}{i})(\gamma p - \subscr{k}{p})GB^{-1}.
\end{align*}
Now, since $0 \preceq G \preceq I_n$, to ensure the LMI $Q_{22} - Q_{12}^\top Q_{11}^{-1}Q_{12} \succeq 0$ holds, we must set the negative definite term $-(\subscr{k}{p} - \subscr{k}{i})^2 \rho^{-1}I_n$ to zero, that is set $\subscr{k}{p} = \subscr{k}{i}$. Next, we compute
\begin{align}
\frac{3 \gamma p}{2}\left(Q_{22} - Q_{12}^\top Q_{11}^{-1}Q_{12}\right) \succeq&~ 3 \gamma^2 p \subscr{k}{p}G - \gamma^2 \subscr{k}{p}^2 L G^2 - 2\gamma \subscr{k}{p}(\gamma p - \subscr{k}{p}) G^2 - (\gamma p - \subscr{k}{p})^2\rho^{-1} G^2  \nonumber\\
\succeq & \left(3 \gamma^2 p \subscr{k}{p} - \gamma^2 \subscr{k}{p}^2 L - 2\gamma \subscr{k}{p}(\gamma p - \subscr{k}{p}) - (\gamma p - \subscr{k}{p})^2\rho^{-1} \right)G^2 \succeq 0 \label{ineq:p1} \\
\iff & 3 p \gamma^2 \rho \subscr{k}{p} - \gamma^2 \subscr{k}{p}^2 L \rho - 2\gamma \rho \subscr{k}{p}(\gamma p - \subscr{k}{p}) - (\gamma p - \subscr{k}{p})^2 \geq 0 \nonumber \\
\iff & \gamma^2 \rho \subscr{k}{p}(3p - \subscr{k}{p} L) - (\gamma p - \subscr{k}{p})(2\gamma \rho \subscr{k}{p}+ \gamma p - \subscr{k}{p}) \geq 0 \nonumber \\
\Longleftarrow \ & \gamma^2 \rho \subscr{k}{p}(3p - \subscr{k}{p} L) \geq 0 \quad \text{ and } \quad (\gamma p - \subscr{k}{p})(2\gamma \rho \subscr{k}{p} + \gamma p - \subscr{k}{p}) \leq 0, \label{ineq:p2}
\end{align}
where~\eqref{ineq:p1} follows from the inequality $G^2 \preceq G$. Now, we have 
\begin{enumerate}[label=(\roman*)]
\item $\gamma^2 \rho \subscr{k}{p}(3p - \subscr{k}{p}L) \geq 0 \iff p \geq \frac{\subscr{k}{p}L}{3}$, and
\item $(\gamma p - \subscr{k}{p})(2\gamma \rho \subscr{k}{p} + \gamma p - \subscr{k}{p}) \leq 0 \iff \frac{\subscr{k}{p}(1 - 2\gamma \rho)}{\gamma} \leq p \leq \frac{\subscr{k}{p}}{\gamma}$, where we used the fact that $-1 \leq 1 -2\gamma \rho \leq 1$ being $\gamma \rho \leq \frac{\rho}{L} \leq 1$.
\end{enumerate}
Summing up, inequalities~\eqref{ineq:p2} are satisfied for any $p \in \left[\max\left(\frac{\subscr{k}{p}L}{3}, \frac{\subscr{k}{p}(1 - 2\gamma \rho)}{\gamma}\right), \frac{\subscr{k}{p}}{\gamma}\right]$.
This concludes the proof.
\end{proof}

Next, we show that the PI--PGD globally converges to an equilibrium point.
\bt[Global convergence of~\eqref{eq:pi-pgd_affine}]
\label{thm:GLin-ExpS}
Consider the PI--PGD~\eqref{eq:pi-pgd_affine} with $\subscr{k}{p} = \subscr{k}{i} >0$, satisfying Assumptions~\ref{ass:1_f} and~\ref{ass:2_A}. Let $\gamma \in \left]0, \frac{1}{L}\right]$, $p = \frac{\subscr{k}{p}}{\gamma}$,
and $P$ be as in~\eqref{eq:matrix_P}.
Let $\mathcal{Z}^\star := \{\xstar\} \times \Lambda^\star$, where $\xstar$ is the unique minimizer of~\eqref{eq:eq_constrained_non_smooth} and $\Lambda^\star$ is the associated set of Lagrange multipliers.
Then every solution $z(t) = (x(t), \lambda(t))$ of the PI--PGD~\eqref{eq:pi-pgd_affine} satisfies $x(t) \to \xstar$ as $t \to \infty$, and
\begin{enumerate}[label=\textup{(\roman*)}]
\item $z(t)$ converges to $\bar z = (\xstar, \bar\lambda) \in \mathcal{Z}^\star$; \label{item:global_conv}
\item if $\zstar := (\xstar, \lambda^\star)$ is a locally exponentially stable equilibrium of~\eqref{eq:pi-pgd_affine}, then there exists a norm $\norm{\cdot}_{\loc}$ such that $z(t)$ linear-exponentially converges towards $\zstar$ with respect to the norms $\norm{\cdot}_{P}$ and $\norm{\cdot}_{\loc}$.\label{item:lin_exp}
\end{enumerate}
\et
\begin{proof}
By Lemma~\ref{lem:equivalence}, the equilibria of the PI--PGD~\eqref{eq:pi-pgd_affine} are the stationary points of~\eqref{eq:eq_constrained_non_smooth}. Under Assumption~\ref{ass:1_f}, the cost is strongly convex and thus the primal minimizer $\xstar$ is unique. Assumption~\ref{ass:2_A} ensures that the dual set $\Lambda^\star$ is non-empty. Therefore, the equilibrium set is $\mathcal{Z}^\star = \{\xstar\}\times\Lambda^\star$.

By Lemma~\ref{thm:weak-contractivity}, for the considered $\gamma$ and $p$, the PI-PGD~\eqref{eq:pi-pgd_affine} is weakly infinitesimally contracting with respect to the norm $\norm{\cdot}_{P}$. Then,~\cite[Theorem 4.4]{FB:24-CTDS} guarantees that every equilibrium $z^\star$ is stable with weak Lyapunov function $V(z) = \frac{1}{2}\norm{z - \zstar}_{P}$.
This in turn implies that the trajectory $z(t)$ is bounded for all $t \ge 0$ and since $V(z(t))$ is bounded below by zero and is monotonically non-increasing, the limit $l := \lim_{t \to \infty} \norm{z(t) - \zstar}_{P}$ exists for any $\zstar \in \mathcal{Z}^\star$.

Because the trajectory is bounded, it must converge to a non-empty, compact, positively invariant $\omega$-limit set, which we denote as $\Omega$.
Let $\bar{z}\in\Omega$ be arbitrary and let $\bar{z}(t) = (\bar{x}(t), \bar{\lambda}(t))$ denote the solution of~\eqref{eq:pi-pgd_affine} with $\bar{z}(0)=\bar{z}$. By the positive invariance of $\Omega$, we have $\norm{\bar{z}(t)-\zstar}_P^2 = l$ for all $t\ge 0$. Differentiating this constant yields, for all $t \geq 0$, $0 = \frac{1}{2}\tfrac{d}{dt}\norm{\bar{z}(t)-\zstar}_P^2 = \bigl(\bar{z}(t)-\zstar\bigr)^{\top} P \subscr{F}{PGD}(\bar{z}(t))$.

Next, we show $\frac{d}{dt}\norm{\bar{z} - z^\star}_P^2 \leq - p \norm{\dot{x}(\bar{z})}^2$.
Let $e_x = \bar{x} - \xstar$, $e_\lambda = \bar{\lambda} - \lambda^\star$, and $\Delta f = \nabla f(\bar{x}) - \nabla f(\xstar)$. Evaluating the derivative of $V$ along $\bar{z}(t)$ for $\subscr{k}{p} = \subscr{k}{i} = k$ yields
\beq
\frac{1}{2}\frac{d}{dt}\norm{\bar{z} - z^\star}_P^2 = p e_x^\top \dot{\bar{x}} + e_\lambda^\top \dot{\bar{\lambda}} = p e_x^\top \dot{\bar{x}} + k(A^\top e_\lambda)^\top(\dot{\bar{x}} + e_x),
\label{eq:bound_Vdot}
\eeq
Let $\bar y = \bar x - \gamma(\nabla f(\bar x) + A^\top \bar \lambda)$ and $y^\star = \xstar - \gamma(\nabla f(\xstar) + A^\top \lambda^\star)$. Applying the firm non-expansiveness of the proximal operator (that is, $(\prox{\gamma g}(\bar y) - \prox{\gamma g}(y^\star))^\top(\bar y - y^\star) \ge \norm{\prox{\gamma g}(\bar y) - \prox{\gamma g}(y^\star)}^2$) and substituting the algebraic identities $\prox{\gamma g}(\bar y)  = \bar x + \dot{\bar x}$ and $\prox{\gamma g}(y^\star)  = x^\star$,  yields:
$
(e_x + \dot{\bar x})^\top \bigl(e_x - \gamma \Delta f  - \gamma A^\top e_{\lambda}\bigr) \geq \norm{e_x + \dot{\bar x}}^2.
$
Expanding both sides gives
$
e_x^\top \dot{\bar x} \leq -\norm{\dot{\bar x}}^2 - \gamma \Delta f^\top(e_x + \dot{\bar x}) - \gamma(A^\top e_\lambda)^\top(e_x + \dot{\bar x}).
$
Moreover, we have
\begin{align}
-\Delta f^\top(e_x + \dot{x}) &= -\Delta f^\top e_x - \Delta f^\top \dot{x}\nonumber \\
&\leq -\frac{1}{L}\norm{\Delta f}^2 + \norm{\Delta f}\norm{\dot{x}} \label{ineq:cs}\\
&\leq -\frac{1}{L}\norm{\Delta f}^2 + \left( \frac{1}{2L}\norm{\Delta f}^2 + \frac{L}{2}\norm{\dot{x}}^2 \right) \label{ineq:yineq}\\
&= -\frac{1}{2L}\norm{\Delta f}^2 + \frac{L}{2}\norm{\dot{x}}^2 \leq \frac{L}{2}\norm{\dot{x}}^2,\nonumber
\end{align}
where in~\eqref{ineq:cs} we used $L$-smoothness and convexity of $f(x)$, which guarantees that $\nabla f(x)$ is $L$-co-coercive (i.e., $\Delta f^\top e_x \ge \frac{1}{L}\norm{\Delta f}^2$) and Cauchy-Schwarz, while in~\eqref{ineq:yineq} we used Young's inequality.

Then, using the above inequality and $p = \subscr{k}{p}/\gamma$ we can bound~\eqref{eq:bound_Vdot} as follows
\begin{align*}
\frac{1}{2}\frac{d}{dt}\norm{\bar z - z^\star}_P^2 &\leq -p\norm{\dot{\bar x}}^2 - p\gamma \Delta f^\top(e_x+\dot{\bar x})\\
&\leq -p \left(1 - \frac{\gamma L}{2} \right)\norm{\dot{\bar x}}^2 \leq -\frac{p}{2}\norm{\dot{\bar x}}^2,
\end{align*}
where in the last inequality we used the upper bound $\gamma \leq 1/L$.

Summing up, we have shown that $ 0 = \frac{d}{dt}\norm{\bar{z}(t)-z^\star}_P^2 \le -p \norm{\dot{x}(\bar{z}(t))}^2 \le 0,$
which implies $\dot{x}(\bar{z}(t))=\0_n$, for all $t\ge 0$.
Therefore, the primal trajectory is constant, i.e.,
$
\bar{x}(t)\equiv \hat{x}
$
for some $\hat{x}\in\R^n$. Substituting $\dot{x}(\bar{z}(t))=\0_n$ and $\bar{x}(t)=\hat{x}$ into the second component of~\eqref{eq:pi-pgd_affine} gives
$\dot{\bar{\lambda}}(t) = k(A\hat{x}-b)$.
Hence, $\dot{\bar{\lambda}}(t)$ is constant. Because $\bar{z}(t)\in\Omega$ and $\Omega$ is compact, the trajectory $\bar{\lambda}(t)$ must remain bounded. Since the integral of a nonzero constant vector grows unboundedly with time, $\bar{\lambda}(t)$ remains bounded if and only $k(A\hat{x}-b)=\0_m$, and thus $A\hat{x}=b$. Moreover, since $\dot{x}(\bar{z}(t))=\0_n$, the primal dynamics imply $\hat{x} = \prox{\gamma g} \left(\hat{x} - \gamma\bigl(\nabla f(\hat{x}) + A^\top \bar{\lambda}(t)\bigr) \right)$, for all $t\ge0$.
By the characterization of the proximal operator, this is equivalent to
$ \0_n \in \nabla f(\hat{x}) + \partial g(\hat{x}) + A^\top \bar{\lambda}(t)$, which together with the feasibility condition $A\hat{x}=b$, implies that the pair
$(\hat{x},\bar{\lambda}(t))$ is a stationary point for~\eqref{eq:eq_constrained_non_smooth}. By Lemma~\ref{lem:equivalence}, $\hat{x}$ is therefore a primal minimizer of~\eqref{eq:eq_constrained_non_smooth}. Since the minimizer is unique under Assumption~\ref{ass:1_f}, we conclude that $\hat{x}=x^\star$. Hence, $\bar{z}(t) = (x^\star,\bar{\lambda}) \in \mathcal{Z}^\star$, for all $t\ge0$.

Now, fix any $\bar{z}\in\Omega\subseteq\mathcal{Z}^\star$. Weak contractivity of~\eqref{eq:pi-pgd_affine} implies that $t\mapsto\norm{z(t)-\bar{z}}_P$ is non-increasing. Because $\bar{z}\in\Omega$, there exists a sequence $t_k\to+\infty$ with $z(t_k)\to\bar{z}$, so $\norm{z(t_k)-\bar{z}}_P\to 0$. A non-negative, non-increasing real function that admits a subsequence converging to zero must itself converge to zero. Therefore $\norm{\bar{z}(t)-\bar{z}}_P\to 0$, i.e., $z(t)\to\bar{z}=(\xstar,\bar\lambda)\in\mathcal{Z}^\star$ and, in particular $x(t)\to\xstar$. This proves statement~\ref{item:global_conv}.

Next to prove item~\ref{item:lin_exp}, note that since $\zstar$ is locally exponentially stable, then there exists a norm $\norm{\cdot}_{\loc}$, $\ce >0$ such that the system~\eqref{eq:pi-pgd_affine} is strongly infinitesimally contracting in a neighborhood of the equilibrium point, say it  $\ball{\loc}{z^\star, r}$~\cite{TS:75}. The statement then follows by applying Theorem~\ref{thm:local-exp-contractivity_same_norms}. This concludes the proof.
\end{proof}

\begin{rem}
In the experiments in Section~\ref{ex:lasso}, we numerically validated the local exponential stability assumption of the PI--PGD equilibrium in
Theorem~\ref{thm:GLin-ExpS}. We leave the analysis of finding explicit conditions implying local exponential stability of the equilibrium point to future work.
\end{rem}

\section{Numerical Examples and Applications}
We demonstrate the effectiveness of the PI--PGD in solving constrained composite OPs via two applications: (i) constrained $\ell_1$-regularized least squares problem, also known as LASSO, and (ii) {nonlinear-equality-constrained LASSO}.

\subsection{Equality-Constrained LASSO}
\label{ex:lasso}
Consider the following equality-constrained composite optimization problem:
\beq
\label{eq:constrained_lasso}
\begin{aligned}
\min_{x \in \R^n} \ & \frac{1}{2} x^\top W x + \alpha \norm{x}_1\\
& \text{s.t. } Ax = b,
\end{aligned}
\eeq
where $\alpha >0$, $W \in \R^{n\times n}$ is positive definite, and thus the function $f(x) = \frac{1}{2} x^\top W x$ satisfies Assumption~\ref{ass:1_f}, and $A\in \R^{m \times n}$ satisfies Assumption~\ref{ass:2_A}. Recalling that $\prox{\gamma \alpha \norm{x}_1} = \soft{\gamma \alpha}$, where $\map{\operatorname{soft}_{\gamma \alpha}}{\R^n}{\R^n}$ is the \emph{soft thresholding function} defined by $(\softt{\gamma \alpha}{x})_i= \softt{\gamma \alpha}{x_i}$, and the map $\map{\operatorname{soft}_{\gamma \alpha}}{\R}{\R}$ is defined by
\[
\softt{\gamma \alpha}{x_i} =
\left\{
\begin{aligned}
&x_i - \gamma \alpha\sign{x_i} && \textup{ if } \abs{x_i} > \gamma \alpha,\\
&0 && \textup{ if } \abs{x_i} \leq \gamma \alpha,
\end{aligned}
\right.
\]
with $\map{\operatorname{sign}}{\R}{\{-1,0,1\}}$ being the \emph{sign function} defined by $\sign{x_i} := -1$ if $x_i<0$, $\sign{x_i} := 0$ if $x_i=0$, and $\sign{x_i} :=1$ if $x_i>0$.
Now and throughout the rest of the paper, we slightly abuse notation by using the same symbol for both the scalar and vector forms of the soft-thresholding operator.

The PI--PGD dynamics associated with problem~\eqref{eq:constrained_lasso} are
\beq
\label{eq:pipgd_constrained_lasso}
\begin{cases}
\dot{x} = -x + \soft{\gamma \alpha}\!\bigl((I_n - \gamma W)x - \gamma A^{\top}\lambda\bigr),\\[4pt]
\dot{\lambda} = (\subscr{k}{i} - \subscr{k}{p})Ax + \subscr{k}{p}A\soft{\gamma \alpha}\!\bigl((I_n - \gamma W)x - \gamma A^{\top}\lambda\bigr) - \subscr{k}{i}b.
\end{cases}
\eeq

For the simulations, we set $n=10$, $m=5$, $\alpha=1$, and $W = 10I_n + \tilde{W}\tilde{W}^\top \succ 0$, with $\tilde{W}$, $A$, and $b$ having independent, normally distributed components. We solved~\eqref{eq:constrained_lasso} using \texttt{cvxpy} to obtain the reference solution $z^\star = (x^\star, \lambda^\star)$. Next, we simulate~\eqref{eq:pipgd_constrained_lasso} over the time interval $t \in [0, 20]$ using a forward Euler discretization with stepsize $\Delta t = 0.01$, starting from random initial conditions. In accordance with Theorem~\ref{thm:GLin-ExpS}, we set $\gamma = 1/L$, $\subscr{k}{i} = \subscr{k}{p} = 20$, $p = \subscr{k}{p}/\gamma$, and $P$ as in~\eqref{eq:matrix_P}, where $\rho$ and $L$ are the minimum and maximum eigenvalues of $W$, respectively.
Simulations confirm that $z^\star$ is locally exponentially stable, in accordance with our results, and that the trajectories converge to $z^\star$. We plot the trajectories of the dynamics along with the reference values found using \texttt{cvxpy}, and the constraint $A x - b$ over time in Figure~\ref{fig:trajectories+constraints}.
\begin{figure}[!h]
    \centering
    \includegraphics[width=0.75\linewidth]{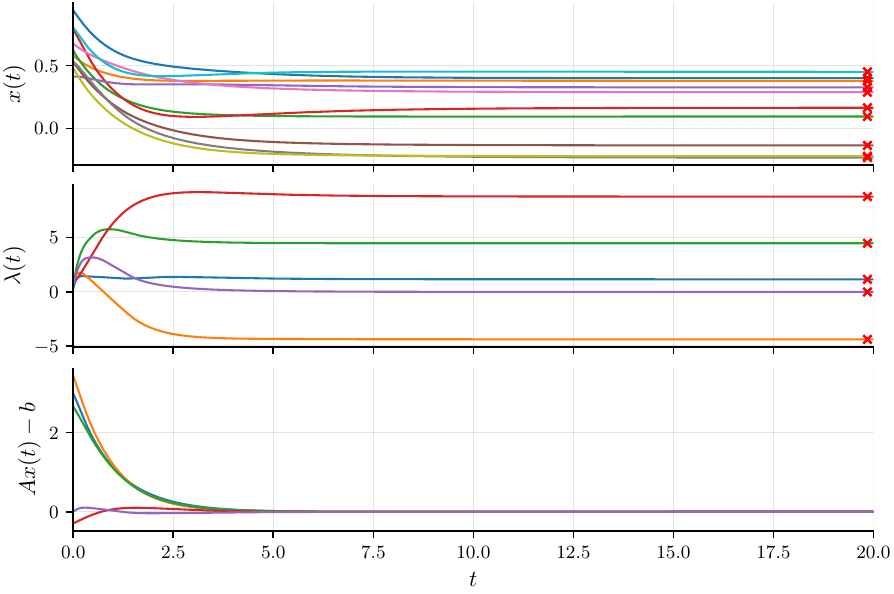}
    \caption{Trajectories of the dynamics~\eqref{eq:pipgd_constrained_lasso} solving the constrained minimization problem~\eqref{eq:constrained_lasso}. The figure shows the trajectories of the primal variables $x(t)$ (top) and two dual variables $\lambda(t)$ (middle), starting from $z^1_0$ and $z^2_0$ as solid and dashed curves, respectively. The \texttt{cvxpy} reference values are shown as dots. The bottom panel displays the constraint residual $A x(t) - b$ over time.}
\label{fig:trajectories+constraints}
\end{figure}
Finally, figure~\ref{fig:lognorm_distance} illustrates the mean and standard deviation of the lognorm of the $\norm{\cdot}_P$ distance across 150 simulated trajectories of~\eqref{eq:pipgd_constrained_lasso} w.r.t. $z^\star$. In agreement with Theorem~\ref{thm:GLin-ExpS}, the figure shows that convergence is linearly-exponentially bounded.

\begin{figure}[!h]
\centering
\includegraphics[width=0.75\linewidth]{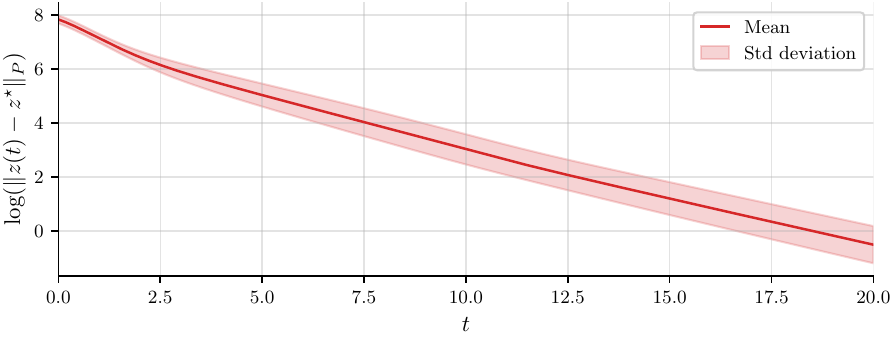}
\caption{Mean and standard deviation of $\log(\norm{z(t) - z^\star}_{P})$ across 150 simulations. Consistent with Theorem~\ref{thm:GLin-ExpS}, convergence is linearly-exponentially bounded.}
\label{fig:lognorm_distance}
\end{figure}
To investigate the influence of the non-smooth regularization term, we repeat the experiment for different values of $\alpha \in \{10^{-2}, 10^{-1}, 1, 10, 30, 50\}$. 
For each $\alpha$, we recompute the reference solution $z^\star$ using \texttt{cvxpy} and simulate~\eqref{eq:pipgd_constrained_lasso} from identical initial conditions. Figure~\ref{fig:nonsmoothness} illustrates the evolution of $\log(\norm{z(t)-z^\star}_P)$. The results show that convergence remains consistent with the theoretical predictions across the entire range of $\alpha$. However, for large values of $\alpha$, the dynamics exhibits deviations from a purely linear-exponential rate. This behavior can be attributed to the increased influence of the $\ell_1$-regularization term causing the trajectories to spend more time near non-differentiable regions of the soft-thresholding operator.
\begin{figure}[!h]
\centering
\includegraphics[width=.75\linewidth]{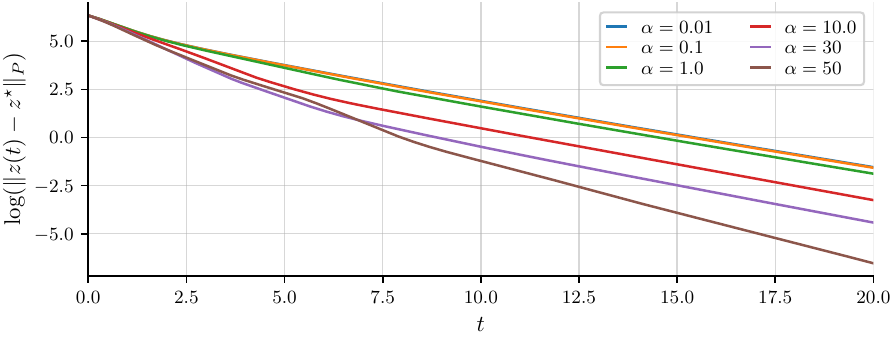}
\caption{Evolution of $\log(\norm{z(t)-z^\star}_P)$ for $\alpha \in \{10^{-2}, 10^{-1}, 1, 10, 30, 50\}$. Larger values of $\alpha$ lead to increased influence of non-differentiable regions.}
\label{fig:nonsmoothness}
\end{figure}

\paragraph{Comparison with the Proximal Augmented Lagrangian Gradient Flow in~\cite{NKD-SZK-MRJ:19}.}
\label{sec:comparison_pal}
We compare the proposed PI--PGD~\eqref{eq:pipgd_constrained_lasso} with the continuous-time primal--dual flow derived from the \emph{proximal augmented Lagrangian} (PAL) in~\cite{NKD-SZK-MRJ:19}.
The PAL dynamics addresses composite optimization problems of the form
\beq
\label{eq:pal_composite}
\min_{x} f(x) + g(\mathcal{T}(x)),
\eeq
where $f$ is continuously differentiable, $g$ is convex, closed, and proper, and $\mathcal{T}$ is a bounded linear operator. The PAL is
\beq
\label{eq:pal-gf-general}
\begin{cases}
\dot x = -\nabla f(x) - \mathcal{T}^\top \nabla M_{\mu g}\bigl(\mathcal{T}(x)+\mu y\bigr),\\[2pt]
\dot y = \mu\Bigl(\nabla M_{\mu g}\bigl(\mathcal{T}(x)+\mu y\bigr) - y\Bigr).
\end{cases}
\eeq
where $M_{\mu g}$ denotes the Moreau envelope of $g$ with parameter $\mu$~\cite{HHB-PLC:17}. Under Assumptions~\ref{ass:1_f}--\ref{ass:2_A}, and $g$ convex, closed, and proper,~\eqref{eq:pal-gf-general} is globally exponentially stable for every $\mu \ge L - \rho$~\cite[Thm.~3]{NKD-SZK-MRJ:19}.

Next, we show that the equality-constrained optimization problem~\eqref{eq:eq_constrained_non_smooth}, can be equivalently written as in the form~\eqref{eq:pal_composite}. To this end, we follow the two-block construction of~\cite[Remark~1]{NKD-SZK-MRJ:19}. Define the stacked operator $\map{\mathcal{T}}{\R^n}{\R^{n+m}}$,
$$
\mathcal{T}(x) := \begin{pmatrix} x \\ Ax \end{pmatrix},
$$
and the separable non-smooth term $\map{\tilde g}{\R^n\times\R^m}{\R\cup\{+\infty\}}$, $\tilde g(z_1,z_2) := g(z_1) + \iota_{\{b\}}(z_2)$, where $\iota_{\{b\}}$ is the indicator function of the singleton $\{b\}\subset\R^m$. Problem~\eqref{eq:eq_constrained_non_smooth} is then equivalent to $\min_x f(x)+\tilde g(\mathcal{T}(x))$. Since $\prox{\mu \iota_{\{b\}}}(v_2)=b$ for every $\mu>0$, $\tilde g$ is separable and so is its Moreau envelope and we have
$$
M_{\mu \tilde g}(v_1,v_2) = M_{\mu g}(v_1) + \frac{1}{2\mu}\|v_2-b\|^2,
\qquad
\nabla M_{\mu \tilde g}(v_1,v_2) =
\begin{pmatrix}
\nabla M_{\mu g}(v_1) \\
\frac{1}{\mu}(v_2-b)
\end{pmatrix}.
$$
Writing the dual variable as $y=(y_1,y_2)\in\R^n\times\R^m$, the PAL solving the equality constrained problem~\eqref{eq:eq_constrained_non_smooth} is 
\beq
\label{eq:pal-gf}
\begin{cases}
\dot x = -\nabla f(x) - \nabla M_{\mu g}(x+\mu y_1) - A^\top \left(\dfrac{1}{\mu}(Ax-b) + y_2\right),\\
\dot y_1 = \mu\bigl(\nabla M_{\mu g}(x+\mu y_1) - y_1\bigr),\\
\dot y_2 = Ax-b.
\end{cases}
\eeq

In particular, for the equality-constrained LASSO~\eqref{eq:constrained_lasso}, $f(x)= \tfrac12 x^\top W x$ and $g(x)=\alpha\|x\|_1$, so that $\nabla M_{\mu g}(v) = \frac{1}{\mu} \sat{\mu\alpha}{v}$, and the PAL dynamics~\eqref{eq:pal-gf} reads
\beq
\label{eq:pal-gf_lasso}
\begin{cases}
\dot x = - Wx - \frac{1}{\mu} \sat{\mu\alpha}{x+\mu y_1} - A^\top \left(\dfrac{1}{\mu}(Ax-b) + y_2\right),\\
\dot y_1 = \sat{\mu\alpha}{x+\mu y_1} - \mu y_1,\\
\dot y_2 = Ax-b,
\end{cases}
\eeq
with $\mu \geq L-\rho$, $L=\lambda_{\max}(W)$, $\rho=\lambda_{\min}(W)$, guaranteeing global exponential stability of $(x^\star,y_1^\star,y_2^\star)$~\cite[Thm.~3]{NKD-SZK-MRJ:19}.
Both~\eqref{eq:pi-pgd_affine} and~\eqref{eq:pal-gf_lasso} are continuous-time primal-dual flows enjoying global convergence guarantees under Assumptions~\ref{ass:1_f}--\ref{ass:2_A}, but the two approaches differ in two key respects. First, PI-PGD evolves on the state space $\R^{n+m}$, whereas PAL-GF introduces an auxiliary variable $y_1\in\R^n$, yielding a larger state dimension of $2n+m$. Second, PI-PGD is derived from a control-theoretic perspective, while PAL-GF originates from an optimization-theoretic construction, smoothing the non-smooth term $g$ through its Moreau envelope within a proximal augmented Lagrangian.

We simulate~\eqref{eq:pal-gf_lasso} with $\mu \geq L-\rho$ alongside \eqref{eq:pipgd_constrained_lasso} on the same problem instance as above. Each method is discretized via forward Euler over the common horizon $t\in[0,20]$ with stepsize $\Delta t = 10^{-3}$. Figure~\ref{fig:lognorm_distance_comparison} shows the mean and standard deviation, over $150$ random initializations, of the log-distance $\log\|x(t)-x^\star\|$ together with the constraint residual $\log\|Ax(t)-b\|$. 
Both methods converge to the reference solution $x^\star$, as evidenced by the approximately linear decay observed on the logarithmic scale. Following an initial transient, and for the tested parameter settings, PI-PGD exhibits a steeper decay rate in both the optimality and feasibility metrics. The narrow confidence bands indicate that this behavior is robust across the considered random initializations.
\begin{figure}[!h]
\centering
\includegraphics[width=0.8\linewidth]{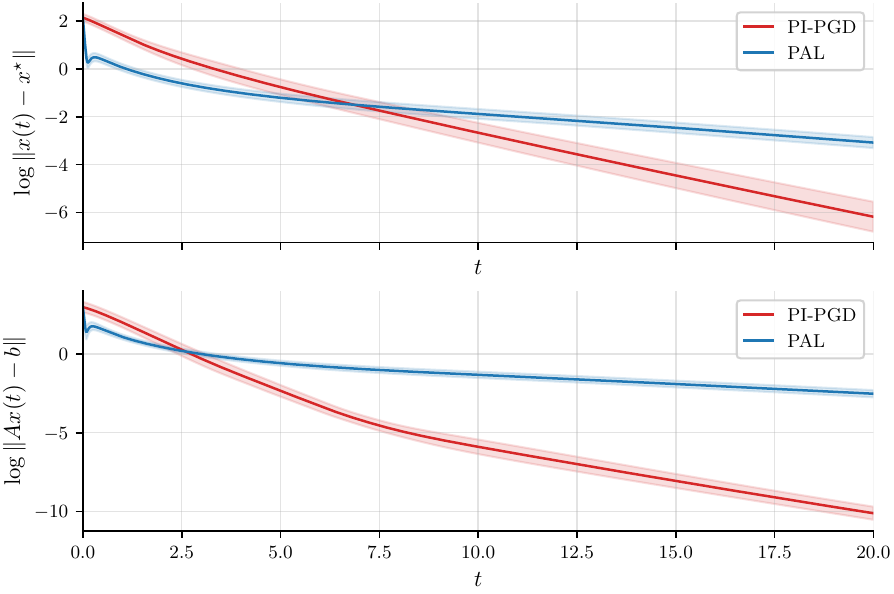}
\caption{Mean and standard deviation over $150$ random initializations of the distances $\log(\norm{x(t) - x^\star})$ (top) and the constraint residual $\log\|Ax(t)-b\|$ (bottom) across 150 simulations for PI-PGD~\eqref{eq:pipgd_constrained_lasso} and PAL-GF~\eqref{eq:pal-gf_lasso}. While both methods converge to $x^\star$, PI-PGD exhibits a consistently steeper decay rate across both quantities.}
\label{fig:lognorm_distance_comparison}
\end{figure}
To assess robustness beyond a single problem instance, we repeat the comparison across $15$ independently generated instances obtained by varying the random seed used to draw $W$, $A$, and $b$, each integrated from $3$ distinct random initial conditions. Each method is discretized via forward Euler over the common horizon $t\in[0,25]$ with stepsize $\Delta t = 10^{-3}$. For every instance the parameters of PI-PGD~\eqref{eq:pipgd_constrained_lasso} and PAL-GF~\eqref{eq:pal-gf_lasso} with $\mu = L-\rho$ are tuned independently.
Figure~\ref{fig:exp2_convergence} reports the mean and standard deviation of the log primal error  $\log\|x(t)-x^\star\|$ (top panel) and the constraint residual  $\log\|Ax(t)-b\|$ (bottom panel). Both methods converge reliably across the full distribution of problem instances, as confirmed by the monotone decay of  both quantities. After an initial transient, PI-PGD exhibits a consistently steeper decay rate than PAL-GF. The narrow shaded bands confirm that this behavior is stable across both problem instances and initial conditions.
\begin{figure}[!h]
\centering
\includegraphics[width=0.8\linewidth]{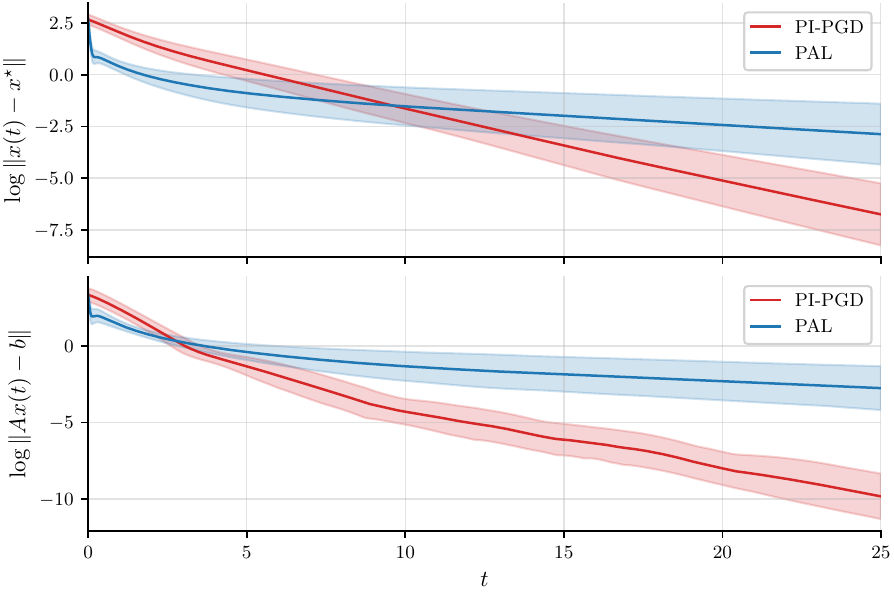}
\caption{Mean and standard deviation over $15$ independently generated instances each integrated from $3$ distinct random initial conditions, of $\log\|x(t)-x^\star\|$ (top) and the constraint residual $\log\|Ax(t)-b\|$ (bottom) for PI-PGD~\eqref{eq:pipgd_constrained_lasso}  and PAL-GF~\eqref{eq:pal-gf_lasso} with $\mu = L-\rho$. PI-PGD achieves a faster asymptotic decay rate across all instances.}
\label{fig:exp2_convergence}
\end{figure}

\subsection{Nonlinear-Equality-Constrained LASSO}
\label{ex:lasso_nonlin}
We next illustrate the applicability of the PI--PGD algorithm to nonconvex composite optimization problems with nonlinear equality constraints. Specifically, we consider the following problem
\beq
\label{eq:opt_problem_non_lin2}
\begin{aligned}
\min_{x \in \R^3} \quad & (x_1 - 1)^2 + (x_2 - 2)^2 + (x_3 + 1)^2 + \alpha \norm{x}_1, \\
\text{s.t.} \quad & h_1(x) := x_1^2 + x_2 - 1 = 0,\\
&h_2(x) := \sin(x_2) + x_3 - 0.5 = 0.
\end{aligned}
\eeq
The Jacobian of the constraints is
\[
D h(x) =
\begin{bmatrix}
2 x_1 & 1 & 0 \\
0 & \cos(x_2) & 1
\end{bmatrix}.
\]
Since the rows of $D h(x)$ are linearly independent for all $x \in \R^3$ (the third column of $D h(x)$ is $[0,1]^\top$), the constraint function $h(x)$ satisfies the LICQ globally.
The corresponding PI--PGD dynamics for problem~\eqref{eq:opt_problem_non_lin2} are
\beq
\label{eq:pi-pgd_non_lin2}
\begin{cases}
\dot{x}_1 = -x_1 + \soft{\gamma \alpha}(x_1 - \gamma (2(x_1 - 1) + 2 \lambda_1 x_1)),\\
\dot{x}_2 = -x_2 + \soft{\gamma \alpha}(x_2 - \gamma (2(x_2 - 2) + \lambda_1 + \lambda_2 \cos(x_2))), \\
\dot{x}_3 = -x_3 + \soft{\gamma \alpha}(x_3 - \gamma (2(x_3 + 1) + \lambda_2)), \\
\dot{\lambda}_1 =  \subscr{k}{p} \big( 2 x_1 \dot{x}_1 + \dot{x}_2 \big) +  \subscr{k}{i} (x_1^2 + x_2 - 1),\\
\dot{\lambda}_2 =  \subscr{k}{p} \big( \cos(x_2) \dot{x}_2 + \dot{x}_3 \big) +  \subscr{k}{i} (\sin(x_2) + x_3 - 0.5).
\end{cases}
\eeq
We solved~\eqref{eq:opt_problem_non_lin2} using the \texttt{SLSQP} solver from the \texttt{SciPy} optimization library to obtain the primal solution $x^\star$, and computed the corresponding multipliers $\lambda^\star$ from the KKT conditions. Next, we simulated the PI--PGD dynamics~\eqref{eq:pi-pgd_non_lin2} over the time interval $t \in [0, 5]$ using \texttt{solve\_ivp}, starting from random initial conditions, with parameters $\alpha =0.5$, $\gamma = 0.5$, $\subscr{k}{i} = 10$, and $\subscr{k}{p} = 15$. Simulations show that the trajectories converge to $z^\star = (x^\star, \lambda^\star)$. We plot the trajectories of the primal and dual variables together with the reference values obtained via \texttt{SLSQP}, and the evolution of the constraint residuals $h(x) = \0_2$ over time in Figure~\ref{fig:trajectories+constraints_nonlin2}. The trajectories of $x(t)$ converge to a neighborhood of the reference optimizer $x^\star$, while the constraints are satisfied after a short settling time. The estimated steady-state multipliers $\lambda(t)$ also approach the reference values $\lambda^\star$ computed from the KKT conditions. Although the theoretical guarantees established in Section~\ref{sec:convergence} do not directly apply to this nonlinear and nonconvex example, these results provide numerical evidence that the PI--PGD dynamics can remain effective beyond the affine/convex regime.
To further illustrate the convergence behavior, we also tracked the cost function $(x_1 - 1)^2 + (x_2 - 2)^2 + (x_3 + 1)^2 + \alpha \norm{x}_1$ over time, shown in Figure~\ref{fig:cost_nonlin2}. The cost decreases monotonically and approaches the reference value computed via \texttt{SLSQP}.
Finally, we also evaluated the effect of different PI gains by simulating the dynamics for several $(\subscr{k}{p}, \subscr{k}{i})$ pairs. Figure~\ref{fig:lognorm_k_nonlin2} shows the distance to the reference solution $\norm{z(t) - z^\star}$ in logarithmic scale for six different gain combinations ranging from $(4,4)$ to $(40,40)$. These plots illustrate how the choice of gains influences convergence speed and transient behavior.
\begin{figure}
\centering
\includegraphics[width=.8\linewidth]{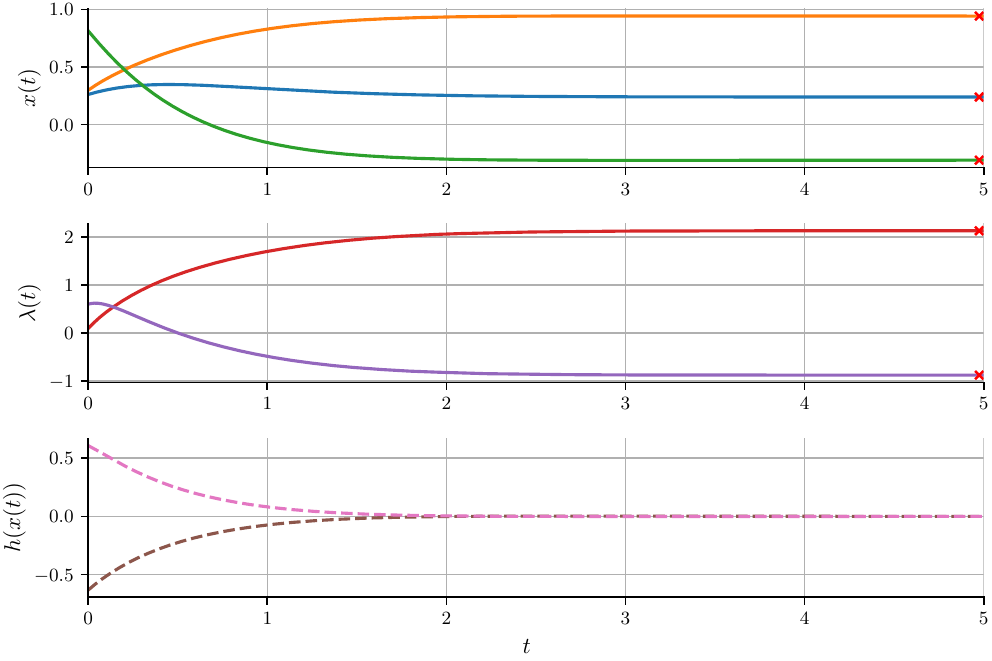}
\caption{Trajectories of~\eqref{eq:pi-pgd_non_lin2} solving LASSO with nonlinear equality constraints~\eqref{eq:opt_problem_non_lin2}. The panel shows the trajectories of the primal variables $x(t)$ (top) and two dual variables $\lambda(t)$ (middle), starting from random initial conditions. The \texttt{SLSQP} reference values are shown as cross. The bottom panel displays the constraint $h(x)$ over time. The trajectories effectively converge to $z^\star$, and constraints are satisfied after a short settling time.}
\label{fig:trajectories+constraints_nonlin2}
\end{figure}
\begin{figure}
\centering
\includegraphics[width=.8\linewidth]{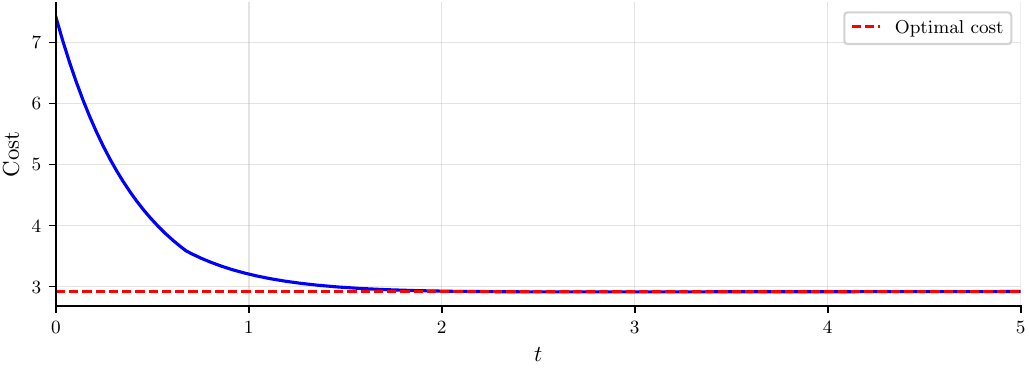}
\caption{Evolution of the cost function over time. The blue curve shows the PI--PGD cost, and the red dashed line indicates the reference cost obtained via \texttt{SLSQP}.}
\label{fig:cost_nonlin2}
\end{figure}
\begin{figure}
\centering
\includegraphics[width=.6\linewidth]{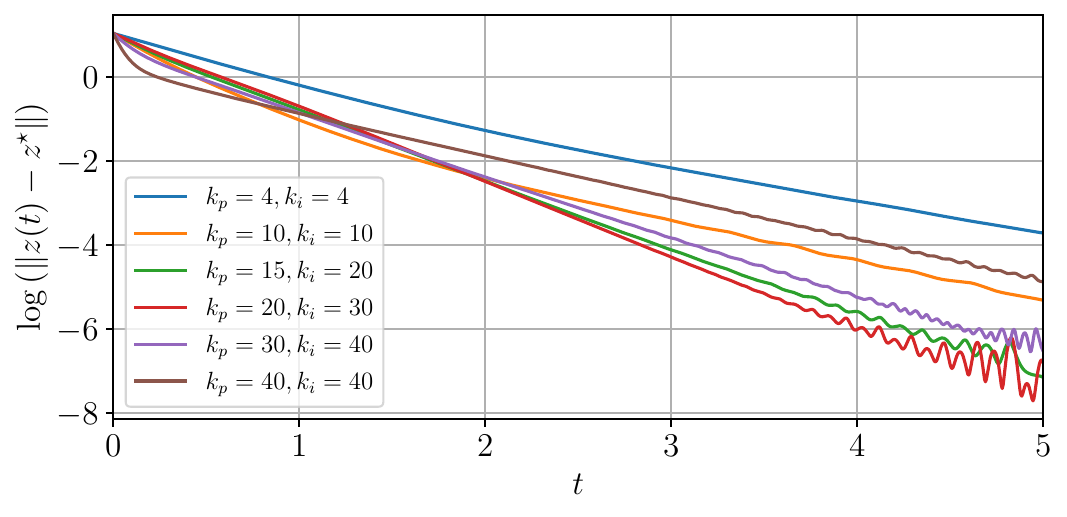}
\includegraphics[width=.6\linewidth]{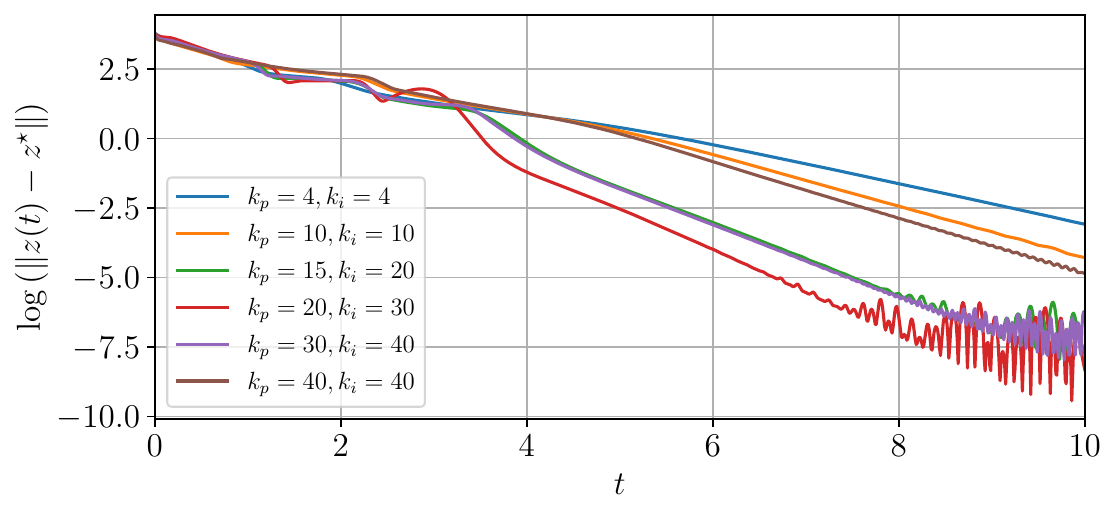}
\includegraphics[width=.6\linewidth]{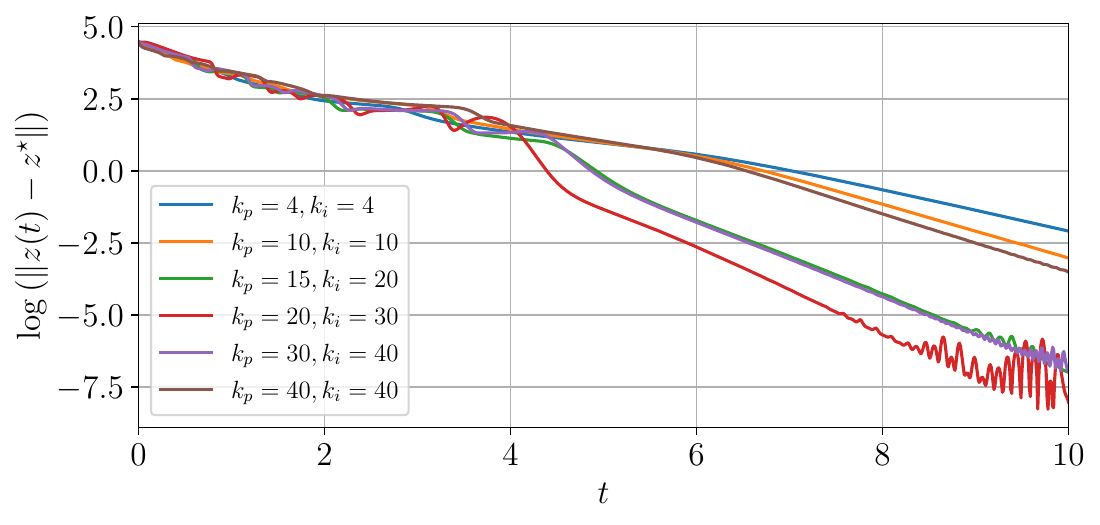}
\caption{Distance to the reference solution $\|z(t) - z^\star \|$ (log scale) for different PI gain pairs $( \subscr{k}{p},  \subscr{k}{i})$. The plot illustrates the effect of gain choice on convergence speed and transient behavior.}
\label{fig:lognorm_k_nonlin2}
\end{figure}

\section{Discussions and Conclusions}
We proposed the PI--PGD~\eqref{eq:pi-pgd} for solving equality-constrained composite OPs. We established the equivalence between the stationary points of the OP and the equilibria of the PI--PGD. For strongly convex and $L$-smooth cost functions and full row-rank affine constraints, we proved global linear-exponential convergence. 
Then, we demonstrated the effectiveness of our approach on equality-constrained LASSO and {further explored its applicability to nonlinear equality constraints}. Remarkably, the PI--PGD converged to the optimal solution even in challenging nonconvex settings. Moreover, for both applications, preliminary numerical experiments suggested that the PI--PGD could still converge to the optimum even when the conditions of Theorem~\ref{thm:GLin-ExpS} were not met.

{Despite these encouraging results, several theoretical aspects of our method remain open.
For composite optimization problems with affine constraints satisfying Assumption~\ref{ass:1_f}--\ref{ass:2_A}, the augmented Lagrangian flows~\cite{NKD-SZK-MRJ:19} is known to be globally strongly contracting~\cite{AD-VC-AG-GR-FB:23f_corrected}. In contrast, the convergence analysis of PI--PGD~\eqref{eq:pi-pgd} is more challenging. At this stage, we have presented the strongest result we could derive, but we believe that tighter guarantees can be obtained through appropriate tuning of the control gains or the design of alternative controllers. 
On the other hand, one of the advantages of the PI--PGD~\eqref{eq:pi-pgd} is that it can be effective in the more challenging setting of nonlinear constraints, as our empirical results suggest. Motivated by the numerical findings,} future work will include:
(i) providing conditions implying local exponential stability of the equilibrium point;
(ii) obtaining sharper convergence bounds and benchmarking our approach against other methods (e.g., the ones from~\cite{AB:17}); (iii) relaxing~\ref{ass:1_f}--\ref{ass:2_A} to cover not-full-rank constraints~\cite{IKO-MRJ:23} and more general constraint structures.
Additionally, we plan to analyze the effects of discretization on the PI-PGD. Indeed, while it is known that for strongly contracting continuous-time dynamics the forward Euler discretization converges under a proper step-size, to the best of our knowledge, a similar result for globally-weakly and locally-strongly contracting dynamics is still missing. Addressing these challenges defines a promising direction for future work.

\appendix
\section{Instrumental Result}
For completeness, we include a self-contained statement of the lemma proposed in~\cite[Lemma~6]{GQ-NL:19}, used in the proof of Lemma~\ref{thm:weak-contractivity}.
\begin{lem}
\label{lemma:BoundQ11}
Let $X = X^\top \in \R^{n\times n}$ satisfy $\xmin \preceq X \preceq \xmax I_n$ for some $\xmax \geq \xmin > 0$, $\gamma > 0$, and $\ds \gamma \leq \frac{1}{\xmax}$. Then for all $g \in [0,1]^n$, the following inequality holds
\beq
\gamma(\diag{g} X+ X\diag{g}) + 2(I_n - \diag{g}) \succ \frac{3}{2}\gamma X.
\eeq
\end{lem}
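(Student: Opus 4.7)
Introduce $M(g) := \gamma(\diag{g} X + X\diag{g}) + 2(I_n - \diag{g}) - \tfrac{3\gamma}{2} X$, so the claim becomes $M(g) \succ 0$ for every $g \in [0,1]^n$. My plan rests on two observations: $M(g)$ depends affinely on each coordinate of $g$, and $X \succ 0$ makes every principal submatrix invertible.

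I would first reduce to the vertices of the box. Because $[M(g)]_{ij} = \gamma X_{ij}(g_i + g_j) - \tfrac{3\gamma}{2} X_{ij} + 2\delta_{ij}(1 - g_i)$ is affine in $g$, for every fixed $v \in \R^n$ the scalar $v^\top M(g) v$ is an affine function of $g$ on the polytope $[0,1]^n$ and therefore attains its minimum at a vertex. It thus suffices to prove $M(g) \succ 0$ for $g \in \{0,1\}^n$. Fixing such a vertex and setting $S := \{i : g_i = 1\}$, the $(S, S^c)$-block form of $M$ is
\[
M(g) = \begin{pmatrix} \tfrac{\gamma}{2} X_{SS} & -\tfrac{\gamma}{2} X_{SS^c} \\ -\tfrac{\gamma}{2} X_{S^c S} & 2 I - \tfrac{3\gamma}{2} X_{S^c S^c} \end{pmatrix}.
\]
The extreme cases are immediate: $S = \{1, \ldots, n\}$ gives $M = \tfrac{\gamma}{2} X \succ 0$, and $S = \emptyset$ gives $M = 2 I - \tfrac{3\gamma}{2} X \succeq (2 - \tfrac{3\gamma \xmax}{2}) I \succeq \tfrac{1}{2} I \succ 0$.

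For the generic case, I would apply the Schur complement against the positive-definite block $\tfrac{\gamma}{2} X_{SS}$, reducing the task to proving $C := 2 I - \tfrac{3\gamma}{2} X_{S^c S^c} - \tfrac{\gamma}{2} X_{S^c S} X_{SS}^{-1} X_{SS^c} \succ 0$. Using the standard identity $u^\top X_{S^c S} X_{SS}^{-1} X_{SS^c} u = u^\top X_{S^c S^c} u - \hat v^\top X \hat v$, with $\hat v \in \R^n$ having $S$-block $-X_{SS}^{-1} X_{SS^c} u$ and $S^c$-block $u$, I obtain
\[
u^\top C u = 2 \|u\|^2 - 2\gamma u^\top X_{S^c S^c} u + \tfrac{\gamma}{2} \hat v^\top X \hat v \geq 2(1 - \gamma \xmax)\|u\|^2 + \tfrac{\gamma \xmin}{2}\|\hat v\|^2.
\]
Since $\hat v_{S^c} = u$ forces $\|\hat v\| \geq \|u\|$, we conclude $u^\top C u \geq \tfrac{\gamma \xmin}{2}\|u\|^2 > 0$ whenever $u \neq 0$; combined with the vertex reduction this yields the lemma.

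The main delicacy I anticipate is preserving \emph{strict} positive definiteness in the boundary case $\gamma \xmax = 1$ that the hypothesis allows: there the reserve $2(1 - \gamma \xmax)\|u\|^2$ vanishes, and strictness must come entirely from $\tfrac{\gamma \xmin}{2}\|\hat v\|^2$. This is precisely where the assumption $\xmin > 0$ is indispensable, since any argument relying only on $X \succeq 0$ would degrade the conclusion to $\succeq$.
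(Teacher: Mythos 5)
Your proof is correct, but it is worth noting that the paper does not actually prove this lemma at all: its ``proof'' is a citation to Lemma~6 of the primal--dual gradient dynamics paper by Qu and Li, so your argument supplies a self-contained derivation where the paper defers to an external reference. Your route --- observing that $M(g) := \gamma(\diag{g}X + X\diag{g}) + 2(I_n - \diag{g}) - \tfrac{3}{2}\gamma X$ is entrywise affine in $g$, so that $v^\top M(g)v$ attains its minimum over the box $[0,1]^n$ at a vertex, then checking each vertex $g \in \{0,1\}^n$ via the Schur complement of the block $\tfrac{\gamma}{2}X_{SS}$ and the identity expressing $X_{S^cS}X_{SS}^{-1}X_{SS^c}$ through the Schur complement of $X$ --- checks out step by step: the block form at a vertex is right, $X_{SS} \succ 0$ because $X \succeq \xmin I_n \succ 0$, the bound $u^\top C u \ge 2(1-\gamma\xmax)\norm{u}_2^2 + \tfrac{\gamma \xmin}{2}\norm{\hat v}_2^2 \ge \tfrac{\gamma\xmin}{2}\norm{u}_2^2$ uses $\gamma\xmax \le 1$ and $\norm{\hat v}_2 \ge \norm{u}_2$ correctly, and the vertex reduction does preserve strictness since for each fixed $v \ne 0$ the minimum is taken over finitely many vertices at which $v^\top M v > 0$. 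You also correctly isolate where strictness survives the boundary case $\gamma\xmax = 1$, namely through $\xmin > 0$. What the citation-based approach buys the paper is brevity; what your argument buys is a transparent, elementary proof that makes explicit the role of each hypothesis ($\gamma \le 1/\xmax$ enters only through the diagonal block $2I - \tfrac{3\gamma}{2}X_{S^cS^c}$ and the $2(1-\gamma\xmax)$ reserve, $\xmin>0$ only through strictness), at the cost of a slightly longer case analysis over the vertices of the cube.
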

\begin{proof}
    See~\cite[Lemma~6]{GQ-NL:19}.
\end{proof}

\section{An Application to Entropic Regularized Optimal Transport}
We refer to the standard set-up from, e.g.,~\cite{GP-MC:19}. Given a space $\mcX$, a \emph{discrete probability measure} with weights $a \in \Sigma_n:= \setdef{q \in \R^n_{\geq 0}}{\sum_{i = 1}^n q_i = 1}$ and locations $x_1, \dots, x_n \in \mcX$ is $\alpha(a, x) = \sum_{i= 1}^n a_i \delta_{x_i}$.
Given $\alpha(a, x)$ and $\beta(b, y)$, the set of \emph{admissible coupling} between them is
$
U(a, b):= \setdef{P \in \R^{n\times m}_{+}}{P\1_m = a, P^\top\1_n = b}.
$
The \emph{entropic-regularized optimal transport} problem can be stated as
\beq
\label{eq:entropy_regularized_OP}
\begin{aligned}
\min_{P \in U(a,b)} &\sum_{i,j} P_{ij} C_{ij} + \epsilon \sum_{i,j} P_{ij} \log{(P_{ij})},
\end{aligned}
\eeq
where $C_{ij} := \operatorname{c}(x_i, y_j)$, $\map{\operatorname{c}}{\R^{n} \times \R^m}{\R_{+}}$ is the cost function, $\epsilon > 0$ is a regularization parameter and with the standard convention $0\log{(0)} = 0$. Problem~\eqref{eq:entropy_regularized_OP} has a unique solution as its objective function is $\epsilon$-strongly convex and the set $U(a, b)$ is convex.
However, the objective function is not {$L$-}smooth, since the Hessian is undefined if $P_{ij}=0$, violating Assumption~\ref{ass:1_f}. Despite this, we present a numerical example showing that the PI--PGD successfully solves problem~\eqref{eq:entropy_regularized_OP} and converges even for small $\epsilon$, in cases where the Sinkhorn algorithm fails. To write the PI--PGD we first vectorize~\eqref{eq:entropy_regularized_OP}. To this end, let $p = \vecto(P) \in \R^{nm}$, $c = \vecto(C) \in \R^{nm}$, $d = (a, b)\in \R^{n+m}$, and
$$
A =
\begin{bmatrix}
\1_m^\top \otimes I_n \\
I_m \otimes \1_n^\top 
\end{bmatrix} \in \R^{(n+m)\times nm}.
$$

Then, problem~\eqref{eq:entropy_regularized_OP} becomes
\begin{align*}
&\min_{p \geq 0} p^\top c + \epsilon \sum_{i}^{nm} p_i \log p_i \\
&\quad \text{s.t. } A p = d,
\end{align*}
or equivalently in the form as problem~\eqref{eq:eq_constrained_non_smooth}:
\beq
\label{eq:ot_reg_composite}
\begin{aligned}
&\min_{p \in \R^{nm}} p^\top c + \epsilon \sum_{i}^{nm} p_i \log p_i + \iota_{\R_{\geq 0}^{nm}}(p) \\
&\quad \text{s.t. } A p = d.
\end{aligned}
\eeq
Since $A$ is not full row rank and $a \in \Sigma_n, b\in \Sigma_m$, any one of the $n+m$ equality constraints $A p = d$ is redundant. Thus, without loss of generality, we remove the last constraint and let $\tilde A$ and $\tilde d$ be $A$ without the last row and $d$ without the last entry, respectively. This reduction ensures that $\tilde A$ has full row rank.

{We let $\map{\subscr{f}{OT}}{\R_{\geq 0}^{nm}}{\R}$ be defined by $\subscr{f}{OT}(p) = p^\top c + \epsilon \sum_{i}^{nm} p_i \log p_i$ and make the following:}
\begin{rem}
The indicator function on $\R_{\geq 0}^{nm}$ in~\eqref{eq:ot_reg_composite} enforces {the non-negative constraint and allow us to write the OP~\eqref{eq:entropy_regularized_OP} as in the form of the OP~\eqref{eq:eq_constrained_non_smooth}}.
Therefore, it is reasonable to analyze the function $\subscr{f}{OT}$ only in $\R_{\geq 0}^{nm}$.
\end{rem}

Then, the PI--PGD solving problem~\eqref{eq:ot_reg_composite}
\beq
\label{eq:pi-pgd_ot}
\begin{cases}
\dot p = - p + \relu\left(p - \gamma(\epsilon \log p + \epsilon + c + {\tilde A}^{\top}\lambda)\right) \\
\dot \lambda = (\subscr{k}{i} - \subscr{k}{p}){\tilde A} p + \subscr{k}{p}{\tilde A}\relu\left(p - \gamma(\epsilon \log p + \epsilon + c + {\tilde A}^{\top}\lambda)\right) - \subscr{k}{i}{\tilde d},
\end{cases}
\eeq
where $\lambda \in \R^{n+m-1}$. Note the positive orthant is a forward invariant set for the primal dynamics in~\eqref{eq:pi-pgd_ot}. This ensures that starting with non-negative initial conditions $p_0$, then the primal variable $p(t)$ will remain non-negative.
We validate the PI--PGD~\eqref{eq:pi-pgd_ot} on an image morphing application from~\cite{NB-MVDP-SP-WH:11}. Here, optimal transport is used to transform a picture made of particles into another one by moving the particles. The core idea is to treat each image as a discrete probability distribution, where pixel intensities represent the mass at each location. The transport plan defines how to move this mass from one image to another in an optimal way, considering the cost of moving mass.
In our experiments, the starting {picture represents} number 4 and the final desired {one represents} number 1. Source and target images are from the MNIST dataset. These pictures are stippled using the same number of particles $n=m=100$ with the same mass, therefore $a$ and $b$ are uniform. We define the cost matrix as the Euclidean distance and set $\epsilon=0.001$, $\gamma=0.01$ and $k_p=k_i=100$. The problem is solved using both the PI--PGD method and Sinkhorn algorithm {using the Python Optimal Transport package~\cite{RF-at-all:21}}.
Remarkably, in this set-up with small $\epsilon$, the PI--PGD converges to the optimal transport plan, while {the used standard implementation of Sinkhorn does not converge}. Figure~\ref{fig:OT_comparison_numbers} illustrates the initial, middle, and final frames of the image transformation. The gifs of the complete transformations are available at~\url{https://shorturl.at/mPpEZ}.
\begin{rem}
The Sinkhorn algorithm solves the entropic regularized optimal transport problem using iterative matrix scaling~\cite{MC:13}. It is known that for small regularization parameters $\epsilon$, the algorithm may fail to converge due to numerical instability, as noted in~\cite{GP-MC:19}. Specifically, the main reason is because as $\epsilon$ decreases, the Gibbs kernel $K = \exp(-C / \epsilon)$ becomes numerically ill conditioned leading to potential divergence or stagnation in the iterative process.
\end{rem}
\begin{figure}[!h]
    \centering
    \includegraphics[width=.4\linewidth]{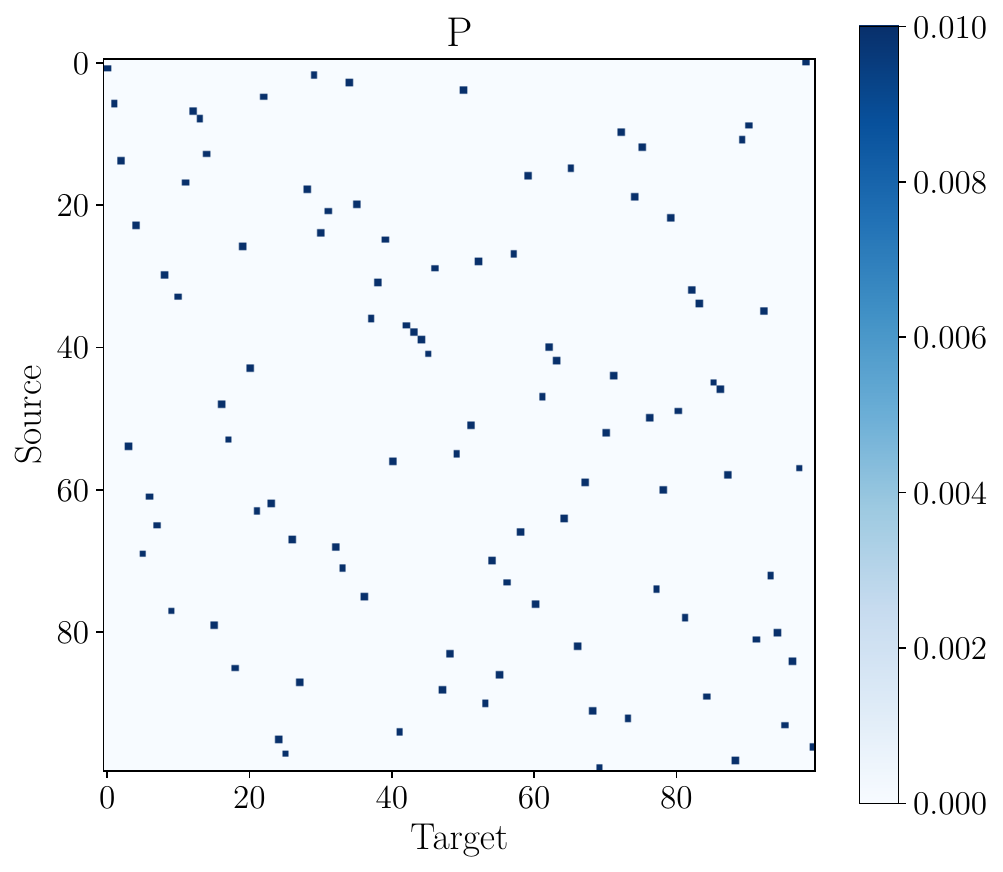}
    \includegraphics[width=.53\linewidth]{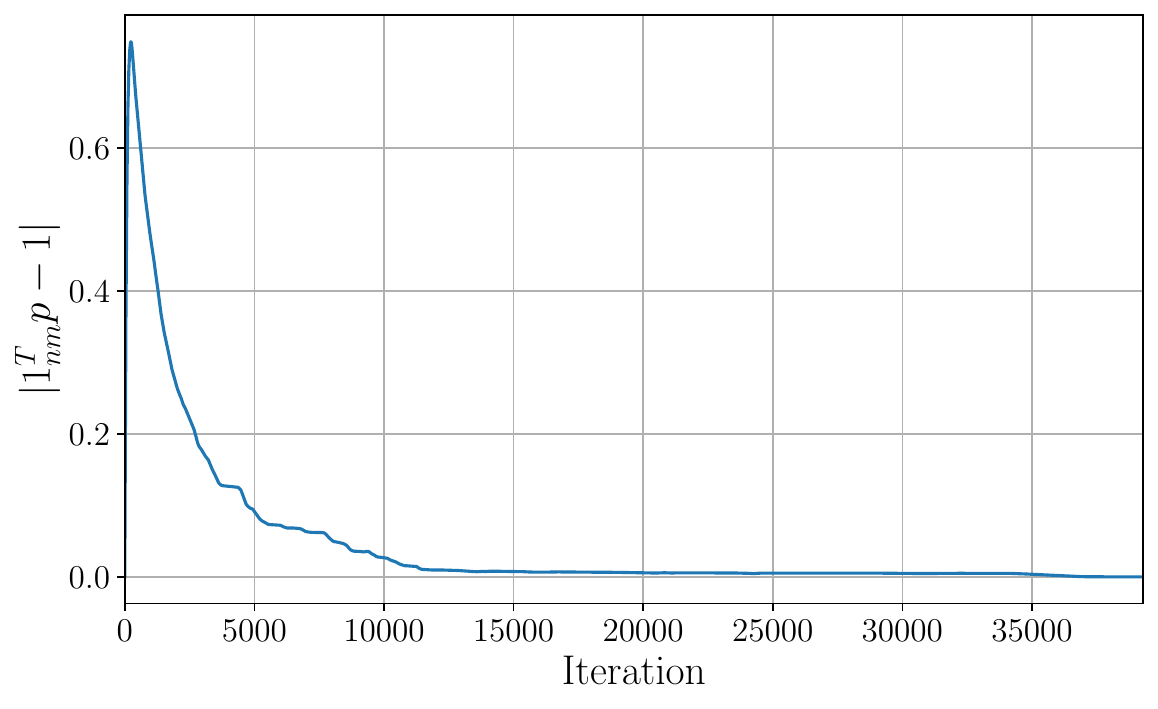}
    \caption{Optimal Transport plan obtained using the PI-PGD~\eqref{eq:pi-pgd_ot} (left). As expected the optimal vector $p \in \R^{nm}$ is a probability vector, that is $p \geq 0$ and $\ds\sum_{j=1}^{nm} p_j = 1$ (right).}
    \label{fig:OT_P_pipgd}
\end{figure}
Figure~\ref{fig:OT_norm_error} shows the norm of the constraint $Ap - b$ over the iterations.
\begin{figure}[!h]
    \centering
    \includegraphics[width=.55\linewidth]{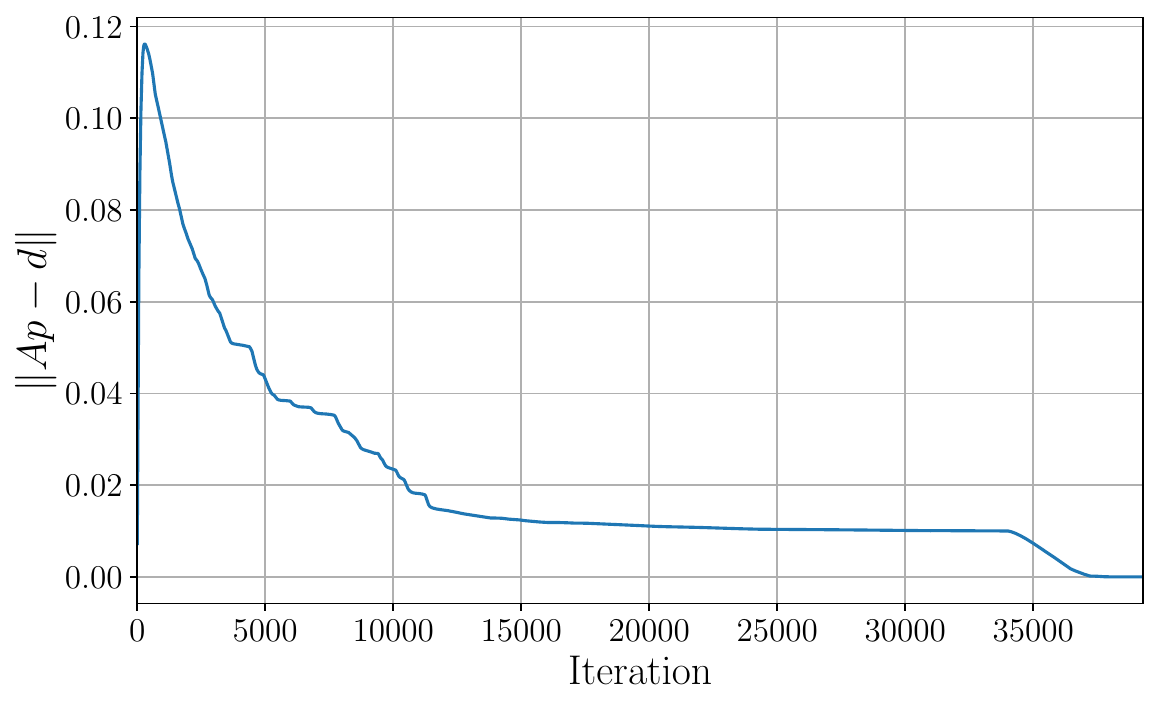}
    \caption{Norm of difference $Ap - d$ over the iterations. In agreement with our results, at convergence, the norm is zero, that is the optimal solution is feasible.}
    \label{fig:OT_norm_error}
\end{figure}
Finally, Figure~\ref{fig:OT_comparison_numbers} illustrates the initial, middle, and final frames of the image transformation. The gifs of the complete transformations are available at~\url{https://shorturl.at/mPpEZ}.
\begin{figure}[!h]
    \centering
    \includegraphics[width=.6\linewidth]{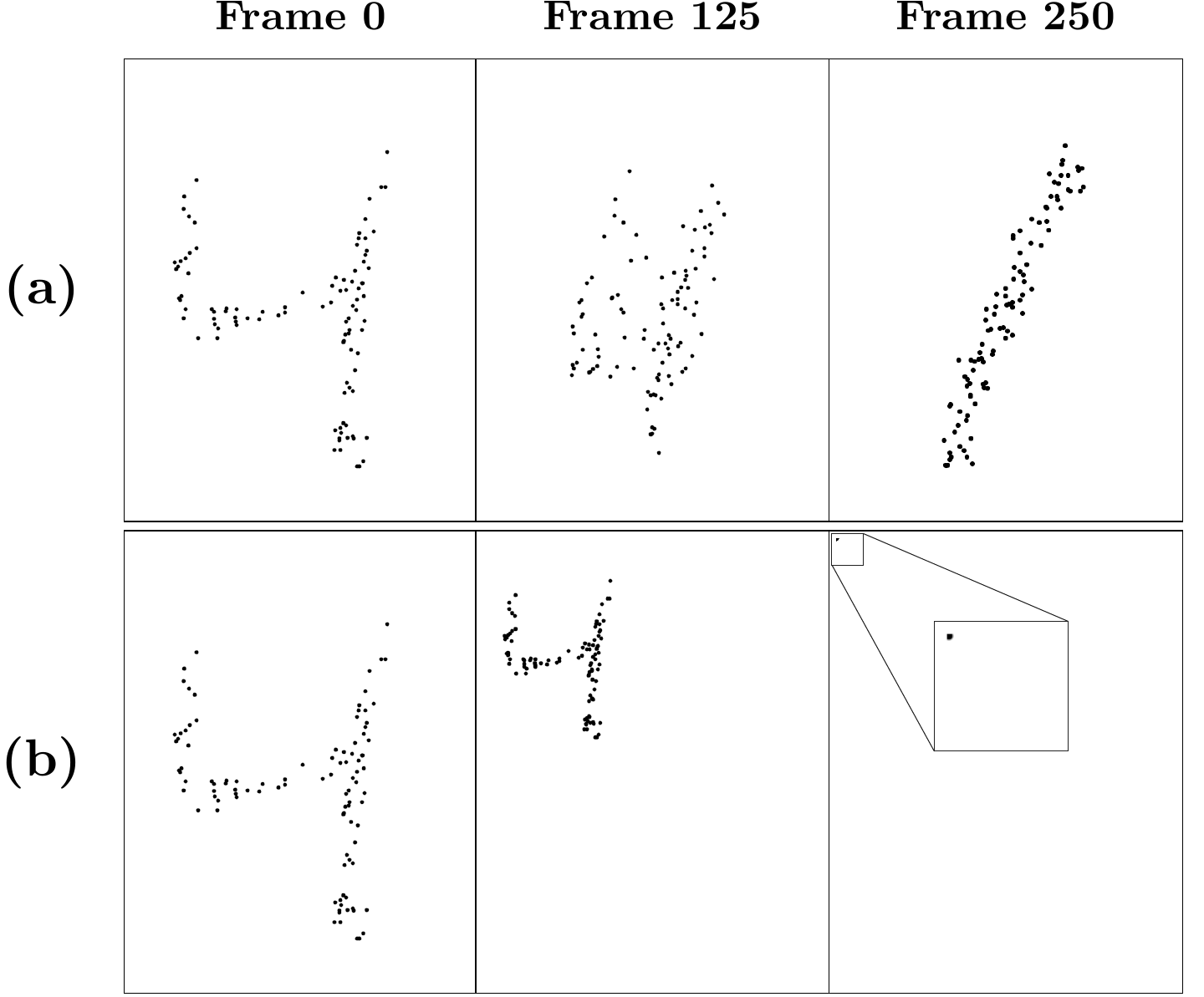}
    \caption{Image morphing via PI-PGD (a) and Sinkhorn (b). Columns show initial, middle and final frames from the animation at $t=\SI{0}{\s}$ (left), $t=\SI{4.17}{\s}$ (center) and $t=\SI{8.33}{\s}$ (right), respectively.}
\label{fig:OT_comparison_numbers}
\end{figure}

\bibliographystyle{plainurl+isbn}
\bibliography{main, new, FB, alias, IEEEabrv}

@Book{FB:24-CTDS,
  author =	 {F. Bullo},
  title =	 {Contraction Theory for Dynamical Systems},
  year =	 2026,
  edition =	 {{1.3}},
  publisher =	 {Kindle Direct Publishing},
  ISBN =	 {979-8836646806},
  ISBN-hardocver = {979-8333395283},
  url =		 {https://fbullo.github.io/ctds},
  pdf =		 {https://fbullo.github.io/ctds},
  oldurl =	 {http://motion.me.ucsb.edu/book-ctds},
  funding =	 {FA9550-22-1-0059}
}

@Article{AD-AVP-FB:22q,
  author =	 {A. Davydov and A. V. Proskurnikov and F. Bullo},
  title =	 {{Non-Euclidean} Contraction Analysis of Continuous-Time
                  Neural Networks},
  journal =	 tac,
  year =	 2025,
  volume =	 70,
  number =	 1,
  nopdf =	 {http://motion.me.ucsb.edu/pdf/2016c-pwb-conference.pdf},
  funding =	 {FA9550-22-1-0059},
  keywords =	 {Contraction Theory, Neural Networks},
  arxivdoi =	 {10.48550/arXiv.2110.08298},
  doi =		 {10.1109/TAC.2024.3422217},
  abstract =	 {Critical questions in dynamical neuroscience and machine
                  learning are related to the study of continuous-time
                  neural networks and their stability, robustness, and
                  computational efficiency. These properties can be
                  simultaneously established via a contraction analysis.
                  This paper develops a comprehensive non-Euclidean
                  contraction theory for continuous-time neural networks.
                  Specifically, we provide novel sufficient conditions for
                  the contractivity of general classes of continuous-time
                  neural networks including Hopfield, firing rate,
                  Persidskii, Lur'e, and other neural networks with respect
                  to the non-Euclidean $\ell_1/\ell_\infty$ norms.  These
                  sufficient conditions are based upon linear programming
                  or, in some special cases, establishing the Hurwitzness
                  of a particular Metzler matrix.  To prove these
                  sufficient conditions, we develop novel results on
                  non-Euclidean logarithmic norms and a novel necessary and
                  sufficient condition for contractivity of systems with
                  locally Lipschitz dynamics.  For each model, we apply our
                  theoretical results to compute the optimal contraction
                  rate and corresponding weighted non-Euclidean norm with
                  respect to which the neural network is contracting.  },
}

@article{VC-AG-AD-GR-FB:23c,
  author =	 {V. Centorrino and A. Gokhale and A. Davydov and G. Russo
                  and F. Bullo},
  title =	 {Euclidean Contractivity of Neural Networks with Symmetric
                  Weights},
  year =	 2023,
  journal =	 lcss,
  volume =	 7,
  pages =	 {1724-1729},
  funding =	 {FA9550-22-1-0059},
  nourl =	 {http://arxiv.org/abs/2302.13452},
  olddoi =	 {10.48550/arXiv.2302.13452},
  doi =		 {10.1109/LCSYS.2023.3278250},
  keywords =	 {Contraction Theory, Neural Networks},
  abstract =	 {This paper investigates stability conditions of
                  continuous-time Hopfield and firing-rate neural networks
                  by leveraging contraction theory.  First, we present a
                  number of useful general algebraic results on matrix
                  polytopes and products of symmetric matrices. Then, we
                  give sufficient conditions for strong and weak Euclidean
                  contractivity, i.e., contractivity with respect to the
                  $\ell_2$ norm, of both models with symmetric weights and
                  (possibly) non-smooth activation functions. Our
                  contraction analysis leads to contraction rates which are
                  log-optimal in almost all symmetric synaptic matrices.
                  Finally, we use our results to propose a firing-rate
                  neural network model to solve a quadratic optimization
                  problem with box constraints.  }
}

@article{VC-AD-AG-GR-FB:24a,
  author =	 {V. Centorrino and A. Davydov and A. Gokhale and G. Russo
                  and F. Bullo},
  title =	 {On Weakly Contracting Dynamics for Convex Optimization},
  year =	 2024,
  Volume =	 8,
  Pages =	 {1745-1750},
  journal =	 lcss,
  keywords =	 {Contraction Theory, Neural Networks},
  funding =	 {FA9550-22-1-0059},
  doi =		 {10.1109/LCSYS.2024.3414348},
  arxivdoi =	 {10.48550/arXiv.2403.07572},
  nourl =	 {http://arxiv.org/abs/2403.07572},
  abstract =	 {We investigate the convergence characteristics of
                  dynamics that are \emph{globally weakly} and
                  \emph{locally strongly contracting}.  Such dynamics
                  naturally arise in the context of convex optimization
                  problems with a unique minimizer.  We show that
                  convergence to the equilibrium is
                  \emph{linear-exponential}, in the sense that the distance
                  between each solution and the equilibrium is upper
                  bounded by a function that first decreases linearly and
                  then exponentially.  As we show, the linear-exponential
                  dependency arises naturally in certain dynamics with
                  saturations.  Additionally, we provide a sufficient
                  condition for local input-to-state stability.  Finally,
                  we illustrate our results on, and propose a conjecture
                  for, continuous-time dynamical systems solving linear
                  programs.}
}

@Book{FHC:83,
  author =	 {F. H. Clarke},
  title =	 {Optimization and Nonsmooth Analysis},
  publisher =	 Wi,
  year =	 1983,
  fbseries =	 {Canadian Mathematical Society Series of Monographs and
                  Advanced Texts},
  ISBN =	 {047187504X}
}

@ARTICLE{GQ-NL:19,
  author =	 {G. Qu and N. Li},
  title =	 {On the Exponential Stability of Primal-Dual Gradient
                  Dynamics},
  year =	 2019,
  journal =	 csl,
  volume =	 3,
  number =	 1,
  pages =	 {43-48},
  doi =		 {10.1109/LCSYS.2018.2851375},
}

@article{GR-MDB-EDS:10a,
  Author =	 {G. Russo and M. {Di~Bernardo} and E. D. Sontag},
  Journal =	 ploscompbio,
  Number =	 4,
  Title =	 {Global entrainment of transcriptional systems to periodic
                  inputs},
  Volume =	 6,
  year =	 2010,
  pages =	 {e1000739},
  doi =		 {10.1371/journal.pcbi.1000739},
}

@ARTICLE{HDN-TLV-KT-JJES:18,
  author =	 {H. D. Nguyen and T. L. Vu and K. Turitsyn and
                  J.-J. E. Slotine},
  journal =	 csl,
  title =	 {Contraction and Robustness of Continuous Time Primal-Dual
                  Dynamics},
  year =	 2018,
  volume =	 2,
  number =	 4,
  pages =	 {755-760},
  doi =		 {10.1109/LCSYS.2018.2847408},
}

@book{KJA-LH-HU:58,
  editor =	 {K. J. Arrow and L. Hurwicz and H. Uzawa},
  Publisher =	 {Stanford University Press},
  Title =	 {Studies in Linear and Nonlinear Programming},
  Year =	 1958
}

@Book{RTR:70,
  author =	 {R. Tyrrell Rockafellar},
  title =	 {Convex Analysis},
  publisher =	 princeton,
  year =	 1970,
  fbseries =	 pms,
  fbaddress =	 princetonaddress,
  fbnote =	 {Reprint:~\cite{RTR:97}}
}

@article{SC:19,
  author =	 {S. Coogan},
  title =	 {A contractive approach to separable {Lyapunov} functions
                  for monotone systems},
  journal =	 automatica,
  volume =	 106,
  pages =	 {349-357},
  doi =		 {10.1016/j.automatica.2019.05.001},
  year =	 2019,
}

@article{TS:75,
  title =	 {On logarithmic norms},
  author =	 {T. Str{\"o}m},
  journal =	 {SIAM Journal on Numerical Analysis},
  volume =	 12,
  number =	 5,
  pages =	 {741--753},
  year =	 1975,
  doi =		 {10.1137/0712055},
}

@book{HHB-PLC:17,
  title =	 {Convex Analysis and Monotone Operator Theory in Hilbert
                  Spaces},
  author =	 {H. H. Bauschke and P. L. Combettes},
  edition =	 2,
  year =	 2017,
  publisher =	 sv,
  ISBN =	 {978-3-319-48310-8},
}

@article{NP-SB:14,
  author =	 {N. Parikh and S. Boyd},
  title =	 {Proximal Algorithms},
  journal =	 {Foundations and Trends in Optimization},
  volume =	 1,
  number =	 3,
  pages =	 {127-239},
  year =	 2014,
  doi =		 {10.1561/2400000003}
}

@ARTICLE{NKD-SZK-MRJ:19,
  author =	 {N. K. Dhingra and S. Z. Khong and M. R. Jovanovi{\'c}},
  journal =	 tac,
  title =	 {The Proximal Augmented {L}agrangian Method for Nonsmooth
                  Composite Optimization},
  year =	 2019,
  volume =	 64,
  number =	 7,
  pages =	 {2861-2868},
  doi =		 {10.1109/TAC.2018.2867589}
}

@book{AB:17,
  title =	 {First-Order Methods in Optimization},
  author =	 {A. Beck},
  year =	 2017,
  publisher =	 SIAM,
  ISBN =	 {978-1-61197-498-0},
}

@INPROCEEDINGS{IKO-MRJ:23,
  author =	 {I. K. Ozaslan and M. R. Jovanovi{\'c}},
  booktitle =	 acc,
  title =	 {On the global exponential stability of primal-dual
                  dynamics for convex problems with linear equality
                  constraints},
  year =	 2023,
  pages =	 {210-215},
  doi =		 {10.23919/ACC55779.2023.10156504},
  address =	 {San Diego, USA}
}

@ARTICLE{AA-JC:24,
  author =	 {A. Allibhoy and J. Cort{\'e}s},
  journal =	 tac,
  title =	 {Control Barrier Function-Based Design of Gradient Flows
                  for Constrained Nonlinear Programming},
  year =	 2024,
  doi =		 {10.1109/TAC.2023.3306492},
  Volume =	 69,
  number =	 6,
  fbnote =	 {Was AA-JC:23},
}

@article{AH-ZH-SB-GH-FD:24,
  title =	 {Optimization algorithms as robust feedback controllers},
  volume =	 57,
  DOI =		 {10.1016/j.arcontrol.2024.100941},
  journal =	 {Annual Reviews in Control},
  author =	 {A. Hauswirth and Z. He and S. Bolognani and G. Hug and
                  F. D\"orfler},
  year =	 2024,
  pages =	 100941
}

@ARTICLE{GB-JC-JIP-EDA:22,
  author =	 {G. Bianchin and J. Cort\'es and J. I. Poveda and
                  E. {Dall'Anese}},
  journal =	 {IEEE Transactions on Control of Network Systems},
  title =	 {Time-Varying Optimization of {LTI} Systems Via Projected
                  Primal-Dual Gradient Flows},
  year =	 2022,
  volume =	 9,
  number =	 1,
  pages =	 {474-486},
  doi =		 {10.1109/TCNS.2021.3112762}
}

@inproceedings{NB-MVDP-SP-WH:11,
  title =	 {Displacement interpolation using {Lagrangian} mass
                  transport},
  author =	 {N. Bonneel and M. {Van~De~Panne} and S. Paris and
                  W. Heidrich},
  booktitle =	 {SIGGRAPH Asia conference},
  pages =	 {1--12},
  number =	 158,
  year =	 2011,
  doi =		 {10.1145/2024156.2024192},
}

@article{GP-MC:19,
  title =	 {Computational optimal transport: {With} applications to
                  data science},
  author =	 {G. Peyr{\'e} and M. Cuturi},
  journal =	 {Foundations and Trends in Machine Learning},
  volume =	 11,
  number =	 {5-6},
  pages =	 {355--607},
  year =	 2019,
  doi =		 {10.1561/2200000073},
}

@STRING{tac  = "IEEE Transactions on Automatic Control"}

@string{csl = "IEEE Control Systems Letters"}

@string{lcss = "IEEE Control Systems Letters"}

@STRING{automatica = "Automatica"}

@string{science = "Science"}

@STRING{acc = "{A}merican {C}ontrol {C}onference"}

@STRING{cdc = "{IEEE} Conf.\ on Decision and Control"}

@String{SIAM = "SIAM"}

@STRING{sv = "Springer"}

@STRING{Wi = "John Wiley \& Sons"}

@STRING{princeton = "Princeton University Press"}

@STRING{princetonaddress= "Princeton, NJ"}

@string{or = "Operations Research"}

@string{X = "Management Science"}

@string{ploscompbio = "PLoS Computational Biology"}

@inproceedings{MC:13,
  title     = {Sinkhorn Distances: {L}ightspeed {C}omputation of {O}ptimal {T}ransport},
  author    = {M. Cuturi},
  booktitle = {{A}dvances in {N}eural {I}nformation {P}rocessing {S}ystems},
  volume    = {26},
  pages     = {2292--2300},
  year      = {2013}
}

@article{RF-at-all:21,
  title={{POT}: {P}ython optimal transport},
  author={R. Flamary and N. Courty and A. Gramfort and M. Z. Alaya and others},
  journal={Journal of {M}achine {L}earning {R}esearch},
  volume={22},
  number={78},
  pages={1--8},
  year={2021}
}

@INPROCEEDINGS{VC-SMF-SP-DR:24b,
  author={V. Cerone and S. M. Fosson and S. Pirrera and D. Regruto},
  booktitle={2024 IEEE 63rd Conference on Decision and Control (CDC)}, 
  title={A feedback control approach to convex optimization with inequality constraints}, 
  year={2024},
  volume={},
  number={},
  pages={2538-2543},
  keywords={Hands;PI control;Heuristic algorithms;Numerical simulation;Convex functions;Numerical models;Feedback control;Dynamical systems;Optimization;Convergence},
  doi={10.1109/CDC56724.2024.10885825}
}

@article{VC-SMF-SP-DR:25,
  title = {A new framework for constrained optimization via feedback control of {L}agrange multipliers},
  author={V. Cerone and S. M. Fosson and S. Pirrera and D. Regruto},
  ISSN = {2334-3303},
  DOI = {10.1109/tac.2025.3568651},
  journal = {IEEE Transactions on Automatic Control},
  publisher = {Institute of Electrical and Electronics Engineers (IEEE)},
  year = {2025},
  pages = {1–16}
}

@inproceedings{RZ-AR-JS-NL:25,
title={Constrained Optimization From a Control Perspective via Feedback Linearization},
author={R. Zhang and A. Raghunathan and J. Shamma and N. Li},
booktitle={The 39th Annual Conference on Neural Information Processing Systems},
year={2025},
url={https://openreview.net/forum?id=zhgfM0dJ3F}
}

@article{AD-VC-AG-GR-FB:23f_corrected,
  author =	 {A. Davydov and V. Centorrino and A. Gokhale and G. Russo and F. Bullo},
  title =	 {Time-Varying Convex Optimization: A Contraction and Equilibrium Tracking Approach},
  journal =	 tac,
  year = {2025},
  pages = {7446–7460},
  volume = {70},
  ISSN = {2334-3303},
  DOI = {10.1109/tac.2025.3576043}
}
\end{document}